\numberwithin{equation}{section}
\numberwithin{figure}{section}
\theoremstyle{definition}
\newtheorem{defn}{\protect\definitionname}
\theoremstyle{remark}
\newtheorem*{notation*}{\protect\notationname}
\theoremstyle{plain}
\newtheorem{assumption}{\protect\assumptionname}
\theoremstyle{remark}
\newtheorem*{rem*}{\protect\remarkname}
\theoremstyle{plain}
\newtheorem{thm}{\protect\theoremname}
\theoremstyle{plain}
\newtheorem{lem}{\protect\lemmaname}
\theoremstyle{plain}
\newtheorem{prop}{\protect\propositionname}
\theoremstyle{remark}
\newtheorem*{acknowledgement*}{\protect\acknowledgementname}
\setlist{nosep}
\tikzset{every picture/.style={/utils/exec={\sffamily}}}
\providecommand{\acknowledgementname}{Acknowledgement}
\providecommand{\assumptionname}{Assumption}
\providecommand{\definitionname}{Definition}
\providecommand{\lemmaname}{Lemma}
\providecommand{\notationname}{Notation}
\providecommand{\propositionname}{Proposition}
\providecommand{\remarkname}{Remark}
\providecommand{\theoremname}{Theorem}
\begin{document}
\newcommand{\R}{\mathbb{R}}
\title{Nonlocal Equations with Gradient Constraints}
\author{Mohammad Safdari$\,{}^{1}$}
\begin{abstract}
We prove the existence and $C^{1,\alpha}$ regularity of solutions
to nonlocal fully nonlinear elliptic equations with gradient constraints.
We do not assume any regularity about the constraints; so the constraints
need not be $C^{1}$ or strictly convex. We also obtain $C^{0,1}$
boundary regularity for these problems. Our approach is to show that
these nonlocal equations with gradient constraints are related to
some nonlocal double obstacle problems. Then we prove the regularity
of the double obstacle problems. In this process, we also employ the
monotonicity property for the second derivative of obstacles, which
we have obtained in a previous work.\medskip{}

\noindent \textsc{Mathematics Subject Classification.} 35R35, 47G20,
35B65.\thanks{$^{1}\;$Department of Mathematical Sciences, Sharif University of
Technology, Tehran, Iran\protect \\
Email address: safdari@sharif.edu}
\end{abstract}

\maketitle

\section{Introduction}

In this paper we consider the existence and regularity of solutions
to the equation with gradient constraint 
\begin{equation}
\begin{cases}
\max\{-Iu,\;H(Du)\}=0 & \textrm{in }U,\\
u=\varphi & \textrm{in }\mathbb{R}^{n}-U.
\end{cases}\label{eq: PDE grad constr}
\end{equation}
Here $I$ is a nonlocal elliptic operator, of which a prototypical
example is the fractional Laplacian 
\[
-(-\Delta)^{s}u(x)=c_{n,s}\int_{\mathbb{R}^{n}}\frac{u(x+y)+u(x-y)-2u(x)}{|y|^{n+2s}}\,dy.
\]
Nonlocal operators appear naturally in the study of discontinuous
stochastic processes as the jump part of their infinitesimal generator.
These operators have also been studied extensively in recent years
from the analytic viewpoint of integro-differential equations. The
foundational works of \citet{caffarelli2009regularity,caffarelli2011evans,caffarelli2011regularity}
paved the way and set the framework for such studies. They provided
an appropriate notion of ellipticity for nonlinear nonlocal equations,
and obtained their $C^{1,\alpha}$ regularity. They also obtained
Evans-Krylov-type $C^{2s+\alpha}$ regularity for convex equations.
An interesting property of their estimates is their uniformity as
$s\uparrow1$, which provides a new proof for the corresponding classical
estimates for local equations.

Free boundary problems involving nonlocal operators have also seen
many advancements. \citet{silvestre2007regularity} obtained $C^{1,\alpha}$
regularity of the obstacle problem for fractional Laplacian. \citet{caffarelli2008regularity}
proved the optimal $C^{1,s}$ regularity for this problem when the
obstacle is smooth enough. \citet*{bjorland2012nonlocal} studied
a double obstacle problem for the infinity fractional Laplacian which
appear in the study of a nonlocal version of the tug-of-war game.
\citet{korvenpaa2016obstacle} studied the obstacle problem for operators
of fractional $p$-Laplacian type. \citet{petrosyan2015optimal} considered
the obstacle problem for the fractional Laplacian with drift in the
subcritical regime $s\in(\frac{1}{2},1)$, and \citet{fernandez2018obstacle}
studied the critical case $s=\frac{1}{2}$. There has also been some
works on other types of nonlocal free boundary problems, like the
work of \citet{rodrigues2019nonlocal} on nonlocal linear variational
inequalities with constraint on the fractional gradient.

A major breakthrough in the study of nonlocal free boundary problems
came with the work of \citet{caffarelli2017obstacle}, in which they
obtained the regularity of the solution and of the free boundary of
the obstacle problem for a large class of nonlocal elliptic operators.
These problems appear naturally when considering optimal stopping
problems for Lévy processes with jumps, which arise for example as
option pricing models in mathematical finance. We should mention that
in their work, the boundary regularity results of \citet{ros2016boundary,ros2017boundary}
for nonlocal elliptic equations were also essential. In \citep{Safd-Nonlocal-dbl-obst}
we proved existence and $C^{1,\alpha}$ regularity of solutions to
double obstacle problems for a wide class of nonlocal fully nonlinear
operators. We also obtained their boundary regularity. In contrast
to \citep{caffarelli2017obstacle}, we allowed less smooth obstacles,
and did not require them to be $C^{1}$.

In this paper we prove $C^{1,\alpha}$ regularity of the equation
with gradient constraint (\ref{eq: PDE grad constr}) for a large
class of nonlocal fully nonlinear operators $I$. We do not require
the operator to be convex. We also do not require the constraint to
be strictly convex or differentiable. Furthermore, we obtain $C^{0,1}$
boundary regularity for these problems, which is more than the expected
boundary regularity for nonlocal Dirichlet problems, due to the presence
of the constraint. Our estimates in this work are uniform as $s\uparrow1$;
hence they provide a new proof for the corresponding regularity results
for local equations with gradient constraints. Nonlocal equations
with gradient constraints appear for example in portfolio optimization
with transaction costs when prices are governed by Lévy processes
with jumps; see \citep{benth2001optimal,benth2002portfolio} and {[}\citealp{oksendal2007applied},
Chapter 8{]} for more details. \textcolor{red}{}

Let us now mention some of the works on local equations with gradient
constraints. The study of elliptic equations with gradient constraints
was initiated by \citet{MR529814} when he considered the problem
\[
\max\{Lu-f,\;|Du|-g\}=0,
\]
where $L$ is a (local) linear elliptic operator of the form 
\[
Lu=-a_{ij}D_{ij}^{2}u+b_{i}D_{i}u+cu.
\]
Equations of this type stem from dynamic programming in a wide class
of singular stochastic control problems. Evans proved $W_{\mathrm{loc}}^{2,p}$
regularity for $u$. He also obtained the optimal $W_{\mathrm{loc}}^{2,\infty}$
regularity under the additional assumption that $a_{ij}$ are constant.
\citet{MR607553} removed this additional assumption and obtained
$W_{\mathrm{loc}}^{2,\infty}$ regularity in general. Later, \citet{MR693645}
allowed the gradient constraint to be more general, and proved global
$W^{2,\infty}$ regularity. We also mention that \citet{soner1989regularity,soner1991free}
considered similar problems with special structure, and proved the
existence of classical solutions.

\citet{yamada1988hamilton} allowed the differential operator to be
more general, and proved the existence of a solution in $W_{\mathrm{loc}}^{2,\infty}$
to the problem 
\[
\max_{1\le k\le N}\{L_{k}u-f_{k},\;|Du|-g\}=0,
\]
where each $L_{k}$ is a (local) linear elliptic operator. Recently,
there has been new interest in these types of problems. \citet{hynd2013analysis}
considered problems with more general gradient constraints of the
form 
\[
\max\{Lu-f,\;\tilde{H}(Du)\}=0,
\]
where $\tilde{H}$ is a convex function. He proved $W_{\mathrm{loc}}^{2,\infty}$
regularity when $\tilde{H}$ is strictly convex. Finally, \citet{Hynd}
studied (local) fully nonlinear elliptic equations with strictly convex
gradient constraints of the form 
\[
\max\{F(x,D^{2}u)-f,\;\tilde{H}(Du)\}=0.
\]
Here $F(x,D^{2}u)$ is a fully nonlinear elliptic operator. They obtained
$W_{\mathrm{loc}}^{2,p}$ regularity in general, and $W_{\mathrm{loc}}^{2,\infty}$
regularity when $F$ does not depend on $x$. Let us also mention
that \citet{Hynd-2012,Hynd2017} considered eigenvalue problems for
(local) equations with gradient constraints too.

Closely related to these problems are variational problems with gradient
constraints. An important example among them is the well-known elastic-plastic
torsion problem, which is the problem of minimizing the functional
$\int_{U}\frac{1}{2}|Dv|^{2}-v\,dx$ over the set 
\[
W_{B_{1}}:=\{v\in W_{0}^{1,2}(U):|Dv|\le1\textrm{ a.e.}\}.
\]
The $W^{2,p}$ regularity for this problem was proved by \citet{MR0239302},
and its optimal $W_{\mathrm{loc}}^{2,\infty}$ regularity was obtained
by \citet{MR513957}. An interesting property of variational problems
with gradient constraints is that under mild conditions they are equivalent
to double obstacle problems. For example the minimizer of $\int_{U}G(Dv)\,dx$
over $W_{B_{1}}$ also satisfies $-d\le v\le d$ and 
\[
\begin{cases}
-D_{i}(D_{i}G(Dv))=0 & \textrm{ in }\{-d<v<d\},\\
-D_{i}(D_{i}G(Dv))\le0 & \textrm{ a.e. on }\{v=d\},\\
-D_{i}(D_{i}G(Dv))\ge0 & \textrm{ a.e. on }\{v=-d\},
\end{cases}
\]
where $d$ is the distance to $\partial U$; see for example \citep{MR1,SAFDARI202176}.
This problem can be more compactly written as 
\[
\max\{\min\{F(x,D^{2}v),v+d\},v-d\}=0,
\]
where $F(x,D^{2}v)=-D_{i}(D_{i}G(Dv))=-D_{ij}^{2}G(x)D_{ij}^{2}v$.

Variational problems with gradient constraints have also seen new
developments in recent years. \citet{MR2605868} investigated the
minimizers of some functionals subject to gradient constraints, arising
in the study of random surfaces. In their work, the functional is
allowed to have certain kinds of singularities. Also, the constraints
are given by convex polygons; so they are not strictly convex. They
showed that in two dimensions, the minimizer is $C^{1}$ away from
the obstacles. In {[}\citealp{Safdari20151}\nocite{safdari2017shape,MR1}\textendash \citealp{MR1},
\citealp{SAFDARI202176}{]} we have studied the regularity and the
free boundary of several classes of variational problems with gradient
constraints. Our goal was to understand the behavior of these problems
when the constraint is not strictly convex; and we have been able
to obtain their optimal $C^{1,1}$ regularity in arbitrary dimensions.
This has been partly motivated by the above-mentioned problem about
random surfaces. 

There has also been similar interests in elliptic equations with gradient
constraints which are not strictly convex. These problems emerge in
the study of some singular stochastic control problems appearing in
financial models with transaction costs; see for example \citep{barles1998option,possamai2015homogenization}.
In \citep{SAFDARI2021358} we extended the results of \citep{Hynd}
and proved the optimal $C^{1,1}$ regularity for (local) fully nonlinear
elliptic equations with non-strictly convex gradient constraints.
Our approach was to obtain a link between double obstacle problems
and elliptic equations with gradient constraints. This link has been
well known in the case where the double obstacle problem reduces to
an obstacle problem. However, we have shown that there is still a
connection between the two problems in the general case. In this approach,
we also studied (local) fully nonlinear double obstacle problems with
singular obstacles. 

Now let us introduce the problem in more detail. First we recall some
of the definitions and conventions about nonlocal operators introduced
in \citep{caffarelli2009regularity}. Let 
\[
\delta u(x,y):=u(x+y)+u(x-y)-2u(x).
\]
A linear nonlocal operator is an operator of the form 
\[
Lu(x)=\int_{\mathbb{R}^{n}}\delta u(x,y)a(y)\,dy,
\]
where the kernel $a$ is a positive function which satisfies $a(-y)=a(y)$,
and 
\[
\int_{\mathbb{R}^{n}}\min\{1,|y|^{2}\}a(y)\,dy<\infty.
\]

We say a function $u$ belongs to $C^{1,1}(x)$ if there are quadratic
polynomials $P,Q$ such that $P(x)=u(x)=Q(x)$, and $P\le u\le Q$
on a neighborhood of $x$. A nonlocal operator $I$ is an operator
for which $Iu(x)$ is well-defined for bounded functions $u\in C^{1,1}(x)$,
and $Iu(\cdot)$ is a continuous function on an open set if $u$ is
$C^{2}$ over that open set. The operator $I$ is \textit{uniformly
elliptic} with respect to a family of linear operators $\mathcal{L}$
if for any bounded functions $u,v\in C^{1,1}(x)$ we have 
\begin{equation}
M_{\mathcal{L}}^{-}(u-v)(x)\le Iu(x)-Iv(x)\le M_{\mathcal{L}}^{+}(u-v)(x),\label{eq: I elliptic}
\end{equation}
where the extremal Pucci-type operators $M_{\mathcal{L}}^{\pm}$
are defined as 
\[
M_{\mathcal{L}}^{-}u(x)=\inf_{L\in\mathcal{L}}Lu(x),\qquad\qquad M_{\mathcal{L}}^{+}u(x)=\sup_{L\in\mathcal{L}}Lu(x).
\]
Let us also note that $\pm M_{\mathcal{L}}^{\pm}$ are subadditive
and positively homogeneous. 

An important family of linear operators is the class $\mathcal{L}_{0}$
of linear operators whose kernels are comparable with the kernel of
fractional Laplacian $-(-\Delta)^{s}$, i.e. 
\begin{equation}
(1-s)\frac{\lambda}{|y|^{n+2s}}\le a(y)\le(1-s)\frac{\Lambda}{|y|^{n+2s}},\label{eq: L_0}
\end{equation}
where $0<s<1$ and $0<\lambda\le\Lambda$. It can be shown that in
this case the extremal operators $M_{\mathcal{L}_{0}}^{\pm}$, which
we will simply denote by $M^{\pm}$, are given by 
\begin{align}
 & M^{+}u=(1-s)\int_{\mathbb{R}^{n}}\frac{\Lambda\delta u(x,y)^{+}-\lambda\delta u(x,y)^{-}}{|y|^{n+2s}}\,dy,\nonumber \\
 & M^{-}u=(1-s)\int_{\mathbb{R}^{n}}\frac{\lambda\delta u(x,y)^{+}-\Lambda\delta u(x,y)^{-}}{|y|^{n+2s}}\,dy,\label{eq: M+-}
\end{align}
where $r^{\pm}=\max\{\pm r,0\}$ for a real number $r$. 

We will also only consider ``constant coefficient'' nonlocal operators,
i.e. we assume that $I$ is translation invariant: 
\[
I(\tau_{z}u)=\tau_{z}(Iu)
\]
for every $z$, where $\tau_{z}u(x):=u(x-z)$ is the translation operator.
In addition, without loss of generality we can assume that $I(0)=0$,
i.e. the action of $I$ on the constant function 0 is 0. Because by
translation invariance $I(0)$ is constant, and we can consider $I-I(0)$
instead of $I$.

Next let us recall some concepts from convex analysis. Let $K$ be
a compact convex subset of $\mathbb{R}^{n}$ whose interior contains
the origin.
\begin{defn}
The \textbf{gauge} function of $K$ is the function 
\begin{equation}
\gamma_{K}(x):=\inf\{\lambda>0:x\in\lambda K\}.\label{eq: gaug}
\end{equation}
And the \textbf{polar} of $K$ is the set 
\begin{equation}
K^{\circ}:=\{x:\langle x,y\rangle\leq1\,\textrm{ for all }y\in K\},\label{eq: K0}
\end{equation}
where $\langle\,,\rangle$ is the standard inner product on $\mathbb{R}^{n}$.
\end{defn}
The gauge function $\gamma_{K}$ is convex, subadditive, and positively
homogeneous; so it looks like a norm on $\mathbb{R}^{n}$, except
that $\gamma_{K}(-x)$ is not necessarily the same as $\gamma_{K}(x)$.
The polar set $K^{\circ}$ is also a compact convex set containing
the origin as an interior point. (For more details, and the proofs
of these facts, see \citep{MR3155183}.)

We assume that the exterior data $\varphi:\mathbb{R}^{n}\to\mathbb{R}$
is a bounded Lipschitz function which satisfies 
\begin{equation}
-\gamma_{K}(y-x)\le\varphi(x)-\varphi(y)\le\gamma_{K}(x-y),\label{eq: phi Lip}
\end{equation}
for all $x,y\in\R^{n}$. Then by Lemma 2.1 of \citep{MR1797872} this
property implies that $D\varphi\in K^{\circ}$ a.e. 

Let $U\subset\R^{n}$ be a bounded open set. Then for $x\in\overline{U}$
the obstacles are defined as
\begin{align}
 & \rho(x)=\rho_{K,\varphi}(x;U):=\underset{y\in\partial U}{\min}[\gamma_{K}(x-y)+\varphi(y)],\nonumber \\
 & \bar{\rho}(x)=\bar{\rho}_{K,\varphi}(x;U):=\underset{y\in\partial U}{\min}[\gamma_{K}(y-x)-\varphi(y)].\label{eq: rho}
\end{align}
It is well known (see {[}\citealp{MR667669}, Section 5.3{]}) that
$\rho$ is the unique viscosity solution of the Hamilton-Jacobi equation
\begin{equation}
\begin{cases}
\gamma_{K^{\circ}}(Dv)=1 & \textrm{in }U,\\
v=\varphi & \textrm{on }\partial U.
\end{cases}\label{eq: H-J eq}
\end{equation}
Now, note that $-K$ is also a compact convex set whose interior contains
the origin. We also have $\bar{\rho}_{K,\varphi}=\rho_{-K,-\varphi}$,
since $\gamma_{-K}(\cdot)=\gamma_{K}(-\,\cdot)$. Thus we have a similar
characterization for $\bar{\rho}$ too.
\begin{notation*}
To simplify the notation, we will use the following conventions 
\[
\gamma:=\gamma_{K},\qquad\gamma^{\circ}:=\gamma_{K^{\circ}},\qquad\bar{\gamma}:=\gamma_{-K}.
\]
Thus in particular we have $\bar{\gamma}(x)=\gamma(-x)$.
\end{notation*}
In \citep{SAFDARI202176} we have shown that $-\bar{\rho}\le\rho$,
and
\begin{equation}
-\gamma(x-y)\le\rho(y)-\rho(x)\le\gamma(y-x).\label{eq: rho Lip}
\end{equation}
The above inequality also holds if we replace $\rho,\gamma$ with
$\bar{\rho},\bar{\gamma}$. Thus in particular, $\rho,\bar{\rho}$
are Lipschitz continuous. We have also shown that $-\bar{\rho}=\varphi=\rho$
on $\partial U$. We extend $\rho,-\bar{\rho}$ to $\R^{n}$ by setting
them equal to $\varphi$ on $\R^{n}-U$. In other words we set 
\begin{equation}
\rho(x)=\begin{cases}
\underset{y\in\partial U}{\min}[\gamma(x-y)+\varphi(y)] & x\in U,\\
\varphi(x) & x\in\R^{n}-U,
\end{cases}\label{eq: rho ext}
\end{equation}
and similarly for $-\bar{\rho}$. Note that these extensions are continuous
functions, and using (\ref{eq: phi Lip}) we can easily show that
$\rho,-\bar{\rho}$ satisfy (\ref{eq: rho Lip}) for all $x,y\in\R^{n}$.

Motivated by the double obstacle problems arising from variational
problems, in \citep{SAFDARI2021358} we have studied (local) fully
nonlinear double obstacle problems of the form 
\[
\begin{cases}
\max\{\min\{F(D^{2}u),\;u+\bar{\rho}\},u-\rho\}=0 & \textrm{in }U,\\
u=\varphi & \textrm{on }\partial U,
\end{cases}
\]
and employed them to obtain the regularity of (local) elliptic equations
with gradient constraints. And motivated by this result, in \citep{Safd-Nonlocal-dbl-obst}
we proved the existence and $C^{1,\alpha}$ regularity of solutions
to nonlocal double obstacle problems of the form (\ref{eq: dbl obstcl}),
in which the obstacles are assumed to be semi-concave/convex functions.
As we will see, this regularity result will play a critical role in
obtaining the regularity of nonlocal equations with gradient constraints.

Now let us state our main results. We denote the Euclidean distance
to $\partial U$ by $d(\cdot)=d(\cdot,\partial U):=\min_{y\in\partial U}|\cdot-y|$.
First let us collect all the assumptions we made so far to facilitate
their referencing.
\begin{assumption}
\label{assu: all}We assume that 
\begin{enumerate}
\item[\upshape{(a)}] $I$ is a translation invariant operator which is uniformly elliptic
with respect to $\mathcal{L}_{0}$ with ellipticity constants $\lambda,\Lambda$,
and $0<s_{0}<s<1$. We also assume that $I(0)=0$.%
\item[\upshape{(b)}] $U\subset\R^{n}$ is a bounded open set, and $\varphi:\mathbb{R}^{n}\to\mathbb{R}$
is bounded and satisfies (\ref{eq: phi Lip}). Also, $\varphi$ is
convex on a neighborhood of $\partial U$.
\item[\upshape{(c)}] $K\subset\R^{n}$ is a compact convex set whose interior contains
the origin, and the obstacles $\rho,-\bar{\rho}$ are defined by (\ref{eq: rho})
and extended as in (\ref{eq: rho ext}).
\item[\upshape{(d)}] The gradient constraint is $H:=\gamma^{\circ}-1$.
\end{enumerate}
\end{assumption}
\begin{rem*}
Note that we are not assuming any regularity about $\partial K$ or
$\partial K^{\circ}$. In particular, $\gamma^{\circ}$, which defines
the gradient constraint, need not be $C^{1}$ or strictly convex.
Also, the obstacles can be highly irregular. Furthermore, note that
any convex gradient constraint of the general form $\tilde{H}(Du)\le0$
for which the set $\{\tilde{H}(\cdot)\le0\}$ is bounded, and contains
a neighborhood of the origin (which is a natural requirement in these
problems), can be written in the form $\gamma^{\circ}-1$ with respect
to the convex set $K=\{\tilde{H}(\cdot)\le0\}^{\circ}$. (Note that
$\{\tilde{H}(\cdot)\le0\}=K^{\circ}$, because the double polar of
such convex sets are themselves, as shown in Section 1.6 of \citep{MR3155183}.)
\end{rem*}
\begin{thm}
\label{thm: Reg PDE grad}Suppose Assumption \ref{assu: all} holds.
Also suppose $\partial U$ is $C^{2}$, and $\varphi$ is $C^{2}$%
{} with $\gamma^{\circ}(D\varphi)<1$. In addition, suppose there is
a bounded continuous function $-\bar{\rho}\le v\le\rho$ that satisfies
$-Iv\le0$ in the viscosity sense in $U$. Then the nonlocal elliptic
equation with gradient constraint (\ref{eq: PDE grad constr}) has
a viscosity solution $u$, and 
\[
u\in C_{\mathrm{loc}}^{1,\alpha}(U)\cap C^{0,1}(\overline{U})
\]
for some $\alpha>0$ depending only on $n,\lambda,\Lambda,s_{0}$.
And for an open subset $V\subset\subset U$ we have 
\begin{equation}
\|u\|_{C^{1,\alpha}(\overline{V})}\le C(\|u\|_{L^{\infty}(\mathbb{R}^{n})}+C_{V}),\label{eq: C1,a u}
\end{equation}
where $C$ depends only on $n,\lambda,\Lambda,s_{0}$, and $d(V,\partial U)$;
and $C_{V}$ depends only on these constants together with $K,\partial U,\varphi$.
\end{thm}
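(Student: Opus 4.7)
The plan is to realize (\ref{eq: PDE grad constr}) as (a limit of) a nonlocal double obstacle problem with upper obstacle $\rho$ and lower obstacle $-\bar{\rho}$, apply the regularity theory of \citep{Safd-Nonlocal-dbl-obst}, and then verify a posteriori that the resulting solution automatically obeys the gradient constraint $\gamma^{\circ}(Du)\le1$. This is the nonlocal analogue of the bridge between the two classes of problems exploited in \citep{SAFDARI2021358}.

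First I would introduce the auxiliary double obstacle problem
\[
\max\bigl\{\min\{-Iu,\;u+\bar{\rho}\},\;u-\rho\bigr\}=0\;\;\textrm{in }U,\qquad u=\varphi\;\;\textrm{on }\R^{n}-U,
\]
which is compatible with the exterior data because $-\bar{\rho}=\varphi=\rho$ on $\partial U$, and for which the hypothesized function $v$ supplies a subsolution (since $v\le\rho$ and $-Iv\le0$). To feed this into \citep{Safd-Nonlocal-dbl-obst}, one needs the obstacles to be semi-concave/convex; since $\rho,-\bar{\rho}$ are only Lipschitz in general, I would regularize by approximating $K$ by compact convex sets with smooth strictly convex boundary, so that the regularized gauges are $C^{1,1}$ and the corresponding obstacles $\rho_{\varepsilon},-\bar{\rho}_{\varepsilon}$ become semi-concave/convex (as extrema of $C^{1,1}$ convex functions). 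The $C^{2}$ assumption on $\partial U$, together with the convexity of $\varphi$ near $\partial U$, is what allows the monotonicity-of-second-derivative property of the obstacles from our earlier work to yield the uniform two-sided $D^{2}$ control near $\partial U$ required as input. Applying the nonlocal double obstacle theory then produces solutions $u_{\varepsilon}\in C_{\mathrm{loc}}^{1,\alpha}(U)\cap C^{0,1}(\overline{U})$ with estimates uniform in $\varepsilon$.

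The main step is then to show that each $u_{\varepsilon}$ automatically satisfies $\gamma_{\varepsilon}^{\circ}(Du_{\varepsilon})\le1$. For small $h\in\R^{n}$ let $w(x):=u_{\varepsilon}(x+h)-\gamma_{\varepsilon}(h)$. Translation invariance of $I$ together with $I(0)=0$ preserve the equation, and a short calculation using the one-sided Lipschitz bounds (\ref{eq: phi Lip}) and (\ref{eq: rho Lip}) (applied to both $\rho_{\varepsilon}$ and $-\bar{\rho}_{\varepsilon}$ in their global extensions) shows that $w$ is a viscosity subsolution of the same double obstacle problem and satisfies $w\le u_{\varepsilon}$ on $\R^{n}-U$. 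A comparison principle for the double obstacle problem propagates this inequality into $U$, so $u_{\varepsilon}(x+h)-u_{\varepsilon}(x)\le\gamma_{\varepsilon}(h)$, which is exactly the gradient constraint. Once $\gamma_{\varepsilon}^{\circ}(Du_{\varepsilon})\le1$, the double obstacle equation immediately yields $\max\{-Iu_{\varepsilon},\;H_{\varepsilon}(Du_{\varepsilon})\}=0$ in the viscosity sense, and passing to the limit $\varepsilon\to0$ via the uniform $C^{1,\alpha}$ estimates gives the desired $u$. The boundary bound $C^{0,1}(\overline{U})$ is then read off from the sandwich $-\bar{\rho}\le u\le\rho$ together with $\rho=\varphi=-\bar{\rho}$ on $\partial U$ and the Lipschitz character of the obstacles.

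I expect the translation-comparison verification of the gradient constraint to be the main obstacle, since nonlocally the comparison must be carried out globally on $\R^{n}$ rather than just in $U$: the shifted function $w$ does not a priori match the exterior data of $u_{\varepsilon}$, and one must simultaneously respect both obstacles and control the action of $I$ on $w$ outside $U$. The fact that the extensions of $\rho,-\bar{\rho}$ by $\varphi$ inherit the one-sided Lipschitz bound (\ref{eq: rho Lip}) globally on $\R^{n}$, noted after (\ref{eq: rho ext}), is precisely what makes the comparison close and ties every piece of Assumption~\ref{assu: all} to the conclusion.
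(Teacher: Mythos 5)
Your overall architecture matches the paper's: approximate $K^{\circ}$ by smooth strictly convex bodies, solve the regularized double obstacle problems via \citep{Safd-Nonlocal-dbl-obst}, prove uniform interior $C^{1,\alpha}$ estimates, pass to the limit, and then identify the limit with a solution of the gradient-constrained equation. Your translation argument for $\gamma^{\circ}(Du)\le1$ is also close in spirit to the paper's Lemma~\ref{lem: g(Du)<1}, although the paper avoids invoking a full comparison principle for the double obstacle problem by instead applying Lemmas 5.8 and 5.10 of \citep{caffarelli2009regularity} to the difference $u(\cdot+h)-u(\cdot)$ on the open sets where $u$ does not touch the obstacles, which sidesteps the issue you yourself flag about the shifted function's exterior data.

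The genuine gap is the sentence claiming that once $\gamma^{\circ}(Du)\le1$ is established, the double obstacle equation \emph{immediately} yields $\max\{-Iu,\,H(Du)\}=0$. The subsolution half is indeed routine (given the hypothesized $v$ to handle contact with the lower obstacle), but the supersolution half is not: at a point $x_{0}$ where $-Iu(x_{0})<0$ one must prove that $\gamma^{\circ}(Du(x_{0}))\ge1$, i.e.\ the constraint is \emph{saturated} on the coincidence set, and this is precisely the nontrivial content of Theorem~\ref{thm: dbl obstcl =00003D PDE grad}. The paper's proof needs two ingredients you have not supplied: first, Lemma~\ref{lem: u<v} uses the auxiliary $v$ to show $v\le u$ and hence reduce the double obstacle problem to the single obstacle problem $\max\{-Iu,\,u-\rho\}=0$, which guarantees that the only way $-Iu<0$ can happen is when $u$ touches $\rho$ from below; second, at such a touching point the paper exploits the geometric structure of $\rho$ (linearity along the segment $[x_{0},y_{0}]$ to a $\rho$-closest boundary point, via Lemma~\ref{lem: segment to the closest pt}) to derive $\langle Du(x_{0}),x_{0}-y_{0}\rangle=\gamma(x_{0}-y_{0})$ and hence $\gamma^{\circ}(Du(x_{0}))\ge1$ by (\ref{eq: gen Cauchy-Schwartz 2}). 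Neither step is automatic, and without them the supersolution inequality is unproved. A secondary understatement in the proposal is describing the passage from the semi-concave/convex obstacle theory to the non-smooth $K$ as if it were a direct application of \citep{Safd-Nonlocal-dbl-obst}: the paper has to construct the explicit barriers $\rho_{k,B^{c}}$ and prove the uniform bound on $Iu_{k}$ (Parts I and II of the proof of Theorem~\ref{thm: Reg dbl obstcl}), using the monotonicity of $D^{2}\rho_{k,B^c}$ along characteristics; this is where the $C^{2}$ hypotheses on $\partial U$ and $\varphi$ and the convexity of $\varphi$ near $\partial U$ are actually consumed.
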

\begin{rem*}
Note that if the equation with gradient constraint (\ref{eq: PDE grad constr})
has a solution then we must have a subsolution ($-I\,\cdot\le0$)
in $U$ with $\gamma^{\circ}(D\,\cdot)\le1$, which easily implies
that $-\bar{\rho}\le\cdot\le\rho$. Thus the existence of $v$ is
a natural requirement. 
\end{rem*}
\begin{rem*}
Note that our estimate is uniform for $s>s_{0}$; so we can retrieve
the interior estimate for local equations with gradient constraints
as $s\to1$. In addition, note that unlike the results on local fully
nonlinear equations with gradient constraints in \citep{Hynd,SAFDARI2021358},
we do not have any convexity assumption about $I$.%
\end{rem*}
We split the proof of Theorem \ref{thm: Reg PDE grad} into two parts.
In Theorem \ref{thm: Reg dbl obstcl} we show that there is $u\in C_{\mathrm{loc}}^{1,\alpha}(U)$
that satisfies the nonlocal double obstacle problem (\ref{eq: dbl obstcl}).
(Note that this result is stronger than the regularity result in \citep{Safd-Nonlocal-dbl-obst},
since here we do not have the assumption of semi-concavity/convexity
of the obstacles; an assumption which fails to hold when $\partial K$
is not smooth enough.) And in Theorem \ref{thm: dbl obstcl =00003D PDE grad}
we prove that $u$ must also satisfy the nonlocal equation with gradient
constraint (\ref{eq: PDE grad constr}).
\begin{thm}
\label{thm: dbl obstcl =00003D PDE grad}Suppose Assumption \ref{assu: all}
holds. Also suppose%
{} there is a bounded continuous function $-\bar{\rho}\le v\le\rho$
that satisfies $-Iv\le0$ in the viscosity sense in $U$. Let $u\in C^{1}(U)$
be a viscosity solution of the double obstacle problem 
\begin{equation}
\begin{cases}
\max\{\min\{-Iu,\;u+\bar{\rho}\},u-\rho\}=0 & \textrm{in }U,\\
u=\varphi & \textrm{in }\mathbb{R}^{n}-U.
\end{cases}\label{eq: dbl obstcl}
\end{equation}
Then $u$ is also a viscosity solution of the equation with gradient
constraint (\ref{eq: PDE grad constr}).
\end{thm}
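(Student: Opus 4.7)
\emph{Proof outline.} The plan is to verify the two halves of the viscosity solution property for (\ref{eq: PDE grad constr}), after first establishing the pointwise bound $\gamma^{\circ}(Du)\le 1$ throughout $U$.

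The core technical step is a translation-comparison argument giving $u(x+h)-u(x)\le\gamma(h)$ for every $x,h\in\mathbb{R}^{n}$. Fix $h$ and set $w(x):=u(x+h)-\gamma(h)$; since $I$ is translation invariant and invariant under addition of constants (a consequence of the ellipticity (\ref{eq: I elliptic})), one has $Iw(x)=Iu(x+h)$. Outside $U$, the inequality $w\le u$ follows from (\ref{eq: phi Lip}) together with the global Lipschitz inequalities (\ref{eq: rho Lip}) for the extended $\rho$ and $-\bar{\rho}$. Inside $U$ I claim that $w$ is a viscosity subsolution of (\ref{eq: dbl obstcl}): the bound $w(x)-\rho(x)\le 0$ follows at once from $\rho(x+h)\le\rho(x)+\gamma(h)$. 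For the $\min$-term, let $\psi$ touch $w$ from above at $x\in U$; then $\tilde{\psi}(y):=\psi(y-h)+\gamma(h)$ touches $u$ from above at $y_{0}:=x+h$. If $y_{0}\in U$, the subsolution property of $u$ at $y_{0}$ together with $u(y_{0})+\bar{\rho}(y_{0})\ge 0$ forces $-I\psi(x)=-I\tilde{\psi}(y_{0})\le 0$. If instead $y_{0}\notin U$, then $u(y_{0})=\varphi(y_{0})=-\bar{\rho}(y_{0})$, and the Lipschitz bound $\bar{\rho}(x)\le\bar{\rho}(y_{0})+\gamma(h)$ yields
\[
w(x)+\bar{\rho}(x)\le u(y_{0})+\bar{\rho}(y_{0})=0,
\]
so the $\min$-term is nonpositive automatically. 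Comparison for (\ref{eq: dbl obstcl}), as developed in \citep{Safd-Nonlocal-dbl-obst}, then forces $w\le u$ in $U$. Taking $h=te$ with $\gamma(e)\le 1$ and $t\downarrow 0$, one recovers $\langle Du(x),e\rangle\le\gamma(e)\le 1$, i.e.\ $\gamma^{\circ}(Du(x))\le 1$.

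For the case analysis, let $\psi$ be a $C^{2}$ test function; since $u\in C^{1}$, one has $D\psi(x_{0})=Du(x_{0})$ whenever $\psi$ touches $u$ at $x_{0}$. On the \emph{subsolution} side, the gradient bound just established gives $\gamma^{\circ}(D\psi(x_{0}))\le 1$ for free. If $u(x_{0})>-\bar{\rho}(x_{0})$, the subsolution property of (\ref{eq: dbl obstcl}) directly forces $-I\psi(x_{0})\le 0$. If $u(x_{0})=-\bar{\rho}(x_{0})$, I invoke the barrier $v$: since $v=\varphi=u$ on $\mathbb{R}^{n}\setminus U$ and $v$ is a subsolution of (\ref{eq: dbl obstcl}) in $U$, the comparison principle gives $v\le u$ in $U$; combined with $v\ge-\bar{\rho}$ this forces $v(x_{0})=u(x_{0})$, so $\psi$ also touches $v$ from above at $x_{0}$ and the hypothesis $-Iv\le 0$ yields $-I\psi(x_{0})\le 0$. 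On the \emph{supersolution} side, if $u(x_{0})<\rho(x_{0})$ the supersolution property of (\ref{eq: dbl obstcl}) gives $-I\psi(x_{0})\ge 0$. If instead $u(x_{0})=\rho(x_{0})$, then $u$ itself, as a $C^{1}$ function, touches $\rho$ from below at $x_{0}$, and the supersolution property of $\rho$ for the Hamilton--Jacobi equation (\ref{eq: H-J eq}) gives $\gamma^{\circ}(Du(x_{0}))\ge 1$, so $\gamma^{\circ}(D\psi(x_{0}))-1\ge 0$.

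The main obstacle is the nonlocal subsolution check for the shift $w$, specifically in the regime where $x+h$ falls outside $U$ and the original equation cannot be invoked at $y_{0}$; the asymmetric cancellation $w+\bar{\rho}\le 0$ produced by the Lipschitz extension of $-\bar{\rho}$ is what saves the day there. The barrier $v$ plays a dual role: it furnishes the comparison that makes $u$ the intended solution of the double obstacle problem, and it supplies the missing $-I\psi\le 0$ inequality on the coincidence set $\{u=-\bar{\rho}\}$ where the double obstacle formulation is otherwise silent about $-Iu$.
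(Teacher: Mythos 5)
Your proposal follows the same overall decomposition as the paper: establish the pointwise gradient bound $\gamma^{\circ}(Du)\le 1$ via a translation argument (the paper's Lemma~\ref{lem: g(Du)<1}), then run the two-sided viscosity check, with the auxiliary function $v$ entering on the subsolution side at the lower coincidence set exactly as the paper does in Lemma~\ref{lem: u<v}. Two of your steps, however, take genuinely different routes. On the supersolution side, where $u(x_{0})=\rho(x_{0})$, the paper gives an explicit computation along the segment $t\mapsto x_{0}+t(y_{0}-x_{0})$ to the $\rho$-closest boundary point $y_{0}$, using Lemma~\ref{lem: segment to the closest pt} to show $\langle Du(x_{0}),x_{0}-y_{0}\rangle=\gamma(x_{0}-y_{0})$ and hence $\gamma^{\circ}(Du(x_{0}))\ge 1$; you instead invoke the fact that $\rho$ is a viscosity supersolution of the Hamilton--Jacobi equation (\ref{eq: H-J eq}) and test it with the $C^{1}$ function $u$ touching from below. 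This is cleaner and relies only on the cited characterization of $\rho$, so it is a legitimate shortcut. For the gradient bound itself, the paper works directly with the extremal operator: on the intersection $V_{1}=\{u<\rho\}\cap(\{u>-\bar{\rho}\}-h)$ it derives $M^{+}(u(\cdot+h)-u)\ge 0$ from Lemma 5.8 of \citep{caffarelli2009regularity} and then applies the maximum principle (Lemma 5.10 of the same), which requires no machinery for the double obstacle problem. You instead shift $u$ to $w=u(\cdot+h)-\gamma(h)$, argue that $w$ is a subsolution of (\ref{eq: dbl obstcl}), and invoke a comparison principle for the full double obstacle problem from \citep{Safd-Nonlocal-dbl-obst}; this works if such a comparison principle is indeed available there, but it is a heavier tool than the paper needs, and you should verify the reference actually proves it.

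There is also a small but genuine gap in your subsolution check for $w$. In the case $y_{0}=x+h\in U$, you claim that the subsolution property of $u$ at $y_{0}$ together with $u(y_{0})+\bar{\rho}(y_{0})\ge 0$ forces $-I\tilde{\psi}(y_{0})\le 0$; but the $\min$ in the double obstacle inequality only yields $-I\tilde{\psi}(y_{0})\le 0$ when $u(y_{0})+\bar{\rho}(y_{0})>0$ strictly. In the remaining subcase $u(y_{0})=-\bar{\rho}(y_{0})$ the same Lipschitz cancellation you use for $y_{0}\notin U$ applies verbatim and gives $w(x)+\bar{\rho}(x)\le 0$, so the $\min$-term is nonpositive anyway. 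The natural case split is therefore $u(y_{0})+\bar{\rho}(y_{0})>0$ versus $u(y_{0})+\bar{\rho}(y_{0})=0$, not $y_{0}\in U$ versus $y_{0}\notin U$; the latter split leaves a hole in the $y_{0}\in U$ branch. With that adjustment your argument closes, but as written the branch is incomplete.
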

\begin{thm}
\label{thm: Reg dbl obstcl}Suppose Assumption \ref{assu: all} holds.
Also suppose $\partial U$ is $C^{2}$, and $\varphi$ is $C^{2}$%
{} with $\gamma^{\circ}(D\varphi)<1$. Then the double obstacle problem
(\ref{eq: dbl obstcl}) has a viscosity solution $u$, and 
\[
u\in C_{\mathrm{loc}}^{1,\alpha}(U)\cap C^{0,1}(\overline{U})
\]
for some $\alpha>0$ depending only on $n,\lambda,\Lambda,s_{0}$.
And for an open subset $V\subset\subset U$ the estimate (\ref{eq: C1,a u})
holds for $\|u\|_{C^{1,\alpha}(\overline{V})}$. 
\end{thm}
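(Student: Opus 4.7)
The plan is to obtain this regularity result as a limit of the theorem in \citep{Safd-Nonlocal-dbl-obst}, whose hypotheses require the obstacles to be semi-concave/convex. Since this property may fail when $\partial K$ is not smooth, I would approximate $K$ by a family of $C^2$, strictly convex sets $K_{\varepsilon}$ with $K\subset K_{\varepsilon}\subset(1+\varepsilon)K$ (for instance by Minkowski-adding a small ball and then mollifying the support function). Let $\gamma_\varepsilon := \gamma_{K_\varepsilon}$; then $\gamma_\varepsilon\to\gamma$ uniformly on compact sets, $\gamma_{K_\varepsilon^\circ}\to\gamma^\circ$ uniformly, and by the strict inequality $\gamma^\circ(D\varphi)<1$ together with compactness of $\overline{U}$, we still have $\gamma_{K_\varepsilon^\circ}(D\varphi)<1$ for all small $\varepsilon$.

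With this choice of $K_\varepsilon$, the corresponding obstacles $\rho_\varepsilon, -\bar{\rho}_\varepsilon$ defined through (\ref{eq: rho}) are semi-concave/convex: they are infima of the $C^2$ functions $y\mapsto\gamma_\varepsilon(\cdot-y)+\varphi(y)$ over the $C^2$ manifold $\partial U$, and the convexity of $\varphi$ near $\partial U$ assumed in Assumption \ref{assu: all}(b) is precisely what yields the matching one-sided bound for the other obstacle. The main result of \citep{Safd-Nonlocal-dbl-obst} then applies and produces a viscosity solution $u_\varepsilon\in C^{1,\alpha}_{\mathrm{loc}}(U)\cap C^{0,1}(\overline{U})$ of the double obstacle problem obtained by replacing $\rho,\bar{\rho}$ with $\rho_\varepsilon,\bar{\rho}_\varepsilon$ in (\ref{eq: dbl obstcl}). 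I would then establish $\varepsilon$-uniform bounds: an $L^\infty$ bound from the envelope inequalities $-\bar{\rho}_\varepsilon\le u_\varepsilon\le \rho_\varepsilon$ together with the uniform boundedness of the obstacles; a global Lipschitz bound on $\overline{U}$ from (\ref{eq: rho Lip}) applied to $\rho_\varepsilon,-\bar{\rho}_\varepsilon$ (whose Lipschitz constants relative to $\gamma_\varepsilon$ are stable as $\varepsilon\downarrow0$); and a uniform interior $C^{1,\alpha}_{\mathrm{loc}}$ bound for $u_\varepsilon$.

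Once these uniform estimates are in place, Arzel\`a--Ascoli produces a subsequence converging locally uniformly in $U$ and uniformly on $\overline{U}$ to some $u$. Because $\rho_\varepsilon\to\rho$ and $-\bar{\rho}_\varepsilon\to-\bar{\rho}$ uniformly on $\R^n$, the standard stability of viscosity solutions for the $\max\{\min\{\cdot,\cdot\},\cdot\}$ operator shows that $u$ solves (\ref{eq: dbl obstcl}), the interior $C^{1,\alpha}$ estimate (\ref{eq: C1,a u}) passes to the limit, and $u\in C^{0,1}(\overline{U})$ follows from the Lipschitz boundary data $\varphi$ together with the squeeze $-\bar{\rho}\le u\le \rho$ near $\partial U$ (both sides agreeing with $\varphi$ there with controlled Lipschitz constant).

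The hard part will be the uniform $C^{1,\alpha}_{\mathrm{loc}}$ bound: the obvious semi-concavity/convexity constants of $\rho_\varepsilon,-\bar{\rho}_\varepsilon$ blow up as $\varepsilon\downarrow0$, so I must revisit the proof in \citep{Safd-Nonlocal-dbl-obst} and show that the constant $C_V$ depends only on quantities that stay under control — namely $n,\lambda,\Lambda,s_0,d(V,\partial U)$ and the $L^\infty$/Lipschitz norms of the obstacles and of $u_\varepsilon$. This is precisely where the monotonicity property for the second derivative of obstacles mentioned in the abstract enters: it provides a one-sided second-order structure that persists in the limit and can replace pointwise semi-concavity/convexity bounds throughout the earlier argument, yielding the desired $\varepsilon$-uniform estimate.
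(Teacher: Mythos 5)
Your outline follows the same top-level architecture as the paper: approximate $K$ (equivalently $K^\circ$) by a family of smooth, strictly convex bodies, invoke the nonlocal double obstacle result of \citep{Safd-Nonlocal-dbl-obst} for the approximate problems, and pass to the limit by a compactness-plus-stability argument. You also correctly identify the genuine obstruction: the constant in that earlier theorem is controlled by the semi-concavity/convexity modulus of the obstacles, which blows up when $\partial K$ is not smooth. So the skeleton is right.

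The gap is in the step you yourself flag as the hard one: the $\varepsilon$-uniform interior $C^{1,\alpha}$ bound. Saying that the monotonicity of $D^2\rho$ ``can replace pointwise semi-concavity/convexity bounds throughout the earlier argument'' is not a proof, and it glosses over a real difficulty. You cannot simply apply the monotonicity lemma to $\rho_\varepsilon$ itself, for two reasons. First, $\rho_\varepsilon$ develops a ridge $R_{\rho_\varepsilon}$ inside $U$ on which it fails to be $C^2$; the nonlocal operator integrates $\delta\rho_\varepsilon(x,h)$ over \emph{all} $h$, so the ridge (whose location depends on $\varepsilon$) contaminates the estimate even when the evaluation point stays away from it. Second, $\rho_\varepsilon$ is only continuous across $\partial U$ after the extension, so $\delta\rho_\varepsilon(x,h)$ for $h$ crossing the boundary is not $O(|h|^2)$. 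What the paper actually does is construct, at each contact point $x_0$ with $\rho_$-closest boundary point $y_0$, the barrier $\rho_{k,B^c}=\rho_{K_k,\varphi}(\,\cdot\,;\R^n-\overline B)$ where $B$ is an exterior ball tangent to $\partial U$ at $y_0$. Because $\overline B$ is strictly convex, $\rho_{k,B^c}$ has \emph{no} ridge and is $C^2$ on all of $\R^n-\overline B$; it lies above $\rho_k$ (hence above $u_k$) and touches at $x_0$; and its Hessian at $\partial B$ is given by the explicit boundary formula and is bounded in terms of the fixed ball radius, $D\gamma_k^\circ$, and the uniform lower bound for $\langle D\gamma_k^\circ(\mu_k),\nu\rangle$. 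The monotonicity lemma then transfers this boundary bound to the interior, giving $\delta\rho_{k,B^c}(x,h)\le C|h|^2$ uniformly and thus $I\rho_{k,B^c}\le C_V$. This pointwise barrier bound, combined with the sub/supersolution property of $u_k$ off the contact sets, yields $-C_V\le Iu_k\le C_V$ in the viscosity sense, after which the interior estimate follows from the $C^{1,\alpha}$ theory for nonlocal equations with bounded right-hand side. Without this barrier device (or an equivalent one) your ``revisit the earlier proof'' step does not close, and the list of quantities you assert $C_V$ depends on (Lipschitz norms of the obstacles and of $u_\varepsilon$) is also not sufficient: one additionally needs control of the second-order boundary geometry of $\partial U$, $D^2\varphi$, and the positive lower bound on $\langle D\gamma_k^\circ(\mu_k),\nu\rangle$, which in the nonzero-$\varphi$ case requires its own compactness argument.
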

\textcolor{red}{}%

The paper is organized as follows. In Section \ref{sec: Prelim} we
prove Theorem \ref{thm: dbl obstcl =00003D PDE grad}. Here we use
Lemma \ref{lem: g(Du)<1} in which we show that a solution to the
double obstacle problem (\ref{eq: dbl obstcl}) must also satisfy
the gradient constraint; and Lemma \ref{lem: u<v} in which we prove
that a solution to the double obstacle problem (\ref{eq: dbl obstcl})
must be larger than a solution to the elliptic equation with gradient
constraint (\ref{eq: PDE grad constr}).%
{} Then we review some well-known facts about the regularity of $K$,
and its relation to the regularity of $K^{\circ},\gamma,\gamma^{\circ}$.
After that we consider the function $\rho$ more carefully. We will
review the formulas for the derivatives of $\rho$ that we have obtained
in \citep{SAFDARI202176}, especially the novel explicit formula (\ref{eq: D2 rho (x)})
for $D^{2}\rho$. To the best of author's knowledge, formulas of this
kind have not appeared in the literature before, except for the simple
case where $\rho$ is the Euclidean distance to the boundary. (Although,
some special two dimensional cases also appeared in our earlier works
\citep{Safdari20151,Paper-4}.)

One of the main applications of the formula (\ref{eq: D2 rho (x)})
for $D^{2}\rho$ is in the relation (\ref{eq: ridge =00003D Q=00003D0})
for characterizing the set of singularities of $\rho$. Another important
application is in Lemma \ref{lem: D2 rho decreas}, which implies
that $D^{2}\rho$ attains its maximum on $\partial U$. This interesting
property is actually a consequence of a more general property of the
solutions to Hamilton-Jacobi equations (remember that $\rho$ is the
viscosity solution of the Hamilton-Jacobi equation (\ref{eq: H-J eq})).
This little-known monotonicity property is investigated in \citep{SAFDARI202176};
but we included a brief account at the end of Section \ref{sec: Prelim}
for reader's convenience.

In Section \ref{sec: Proof Thm 3} we prove the regularity result
for double obstacle problem (\ref{eq: dbl obstcl}), aka Theorem \ref{thm: Reg dbl obstcl},
when the exterior data $\varphi$ is zero. We separated this case
to reduce the technicalities so that the main ideas can be followed
more easily. The proof of the general case of nonzero exterior data
is postponed to Appendix \ref{sec: App A}. The idea of the proof
of Theorem \ref{thm: Reg dbl obstcl} is to approximate $K^{\circ}$
with smoother convex sets. Then we have to find uniform bounds for
the various norms of the approximations $u_{k}$ to $u$. In order
to do this, we construct appropriate barriers, which resemble the
obstacles $\rho,-\bar{\rho}$. Then, among other estimations, we will
use the fact that the second derivative of the barriers attain their
maximums on the boundary. A more detailed sketch of proof for Theorem
\ref{thm: Reg dbl obstcl} is given at the beginning of its proof
(before its Part I) on page \pageref{proof Thm 3}. %

\section{\label{sec: Prelim}Assumptions and Preliminaries}

Let us start by defining the notion of viscosity solutions of nonlocal
equations. We want to study the nonlocal double obstacle problems
and nonlocal equations with gradient constraints. So we state the
definition of viscosity solution in the more general case of a nonlocal
operator $\tilde{I}(x,u(x),Du(x),u(\cdot))$ whose value also depends
on the pointwise values of $x,u(x),Du(x)$ (see \citep{barles2008second}
for more details). For example, in the case of the double obstacle
problem and the equation with gradient constraint we respectively
have 
\begin{align*}
-\tilde{I}(x,r,u(\cdot)) & =\max\{\min\{-Iu(x),\;r-\psi^{-}(x)\},r-\psi^{+}(x)\},\\
-\tilde{I}(x,p,u(\cdot)) & =\max\{-Iu(x),\;H(p)\},
\end{align*}
where we replaced $u(x)$ with $r$ and $Du(x)$ with $p$ to clarify
the dependence of $\tilde{I}$ on its arguments.
\begin{defn}
An upper semi-continuous%
{} function $u:\R^{n}\to\R$ is a \textit{viscosity subsolution} of
$-\tilde{I}\le0$ in $U$ if whenever $\phi$ is a bounded $C^{2}$
function and $u-\phi$ has a maximum over $\R^{n}$ at $x_{0}\in U$
we have 
\[
-\tilde{I}(x_{0},u(x_{0}),D\phi(x),\phi(\cdot))\le0.
\]
And a lower semi-continuous%
{} function $u$ is a \textit{viscosity supersolution} of $-\tilde{I}\ge0$
in $U$ if whenever $\phi$ is a bounded $C^{2}$ function and $u-\phi$
has a minimum over $\R^{n}$ at $x_{0}\in U$ we have 
\[
-\tilde{I}(x_{0},u(x_{0}),D\phi(x),\phi(\cdot))\ge0.
\]
A continuous function $u$ is a \textit{viscosity solution} of $-\tilde{I}=0$
in $U$ if it is a subsolution of $-\tilde{I}\le0$ and a supersolution
of $-\tilde{I}\ge0$ in $U$.
\end{defn}
\begin{lem}
\label{lem: g(Du)<1}Suppose Assumption \ref{assu: all} holds. Let
$u\in C^{1}(U)$ be a viscosity solution of the double obstacle problem
(\ref{eq: dbl obstcl}). Then over $U$ we have 
\[
\gamma^{\circ}(Du)\le1.
\]
\end{lem}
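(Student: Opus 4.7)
The plan is to establish the lemma in two steps. First, I show that $u$ enjoys the global $\gamma$-Lipschitz property
\[
u(y)-u(x)\le \gamma(y-x) \quad\text{for all }x,y\in\mathbb{R}^{n}.
\]
Second, I deduce the pointwise bound $\gamma^{\circ}(Du)\le1$ in $U$ from this property using the $C^{1}$ regularity of $u$.

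The second step is short. Fix $x\in U$ and any $v\in\mathbb{R}^{n}$; applying the $\gamma$-Lipschitz inequality with $y=x+tv$ for small $t>0$ and using positive homogeneity of $\gamma$ gives $[u(x+tv)-u(x)]/t\le\gamma(v)$. Sending $t\downarrow0$ and using $u\in C^{1}(U)$ yields $Du(x)\cdot v\le\gamma(v)$ for every $v\in\mathbb{R}^{n}$, which by the defining relation between $\gamma$ and $\gamma^{\circ}=\gamma_{K^{\circ}}$ is exactly $\gamma^{\circ}(Du(x))\le1$. This is the same mechanism used in Lemma 2.1 of \citep{MR1797872} to deduce $D\varphi\in K^{\circ}$ a.e.\ from (\ref{eq: phi Lip}).

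The heart of the argument is the first step, which I carry out by contradiction and the doubling-of-variables technique. Suppose $M:=\sup_{x,y\in\mathbb{R}^{n}}\{u(y)-u(x)-\gamma(y-x)\}>0$. Since $u$ is bounded and $K$ is a compact convex body with the origin in its interior, $\gamma$ is coercive, so the supremum is attained at some pair $(x_{0},y_{0})$ with $x_{0}\ne y_{0}$. The hypothesis (\ref{eq: phi Lip}) on $\varphi$ excludes the case $x_{0},y_{0}\in\mathbb{R}^{n}\setminus U$, so at least one of the maximizers lies in $U$. The maximality of $M$ produces two natural test functions: $\phi(x):=u(x_{0})+\gamma(y_{0}-x_{0})-\gamma(y_{0}-x)$ touches $u$ from below at $x_{0}$, and $\psi(y):=u(y_{0})+\gamma(y-x_{0})-\gamma(y_{0}-x_{0})$ touches $u$ from above at $y_{0}$. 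A direct computation using only the convexity of $\gamma$ gives the key identity and sign
\[
\delta\psi(y_{0},z) = -\delta\phi(x_{0},z) = \gamma(y_{0}-x_{0}+z)+\gamma(y_{0}-x_{0}-z)-2\gamma(y_{0}-x_{0})\ge0,
\]
so $\psi$ is nonlocally convex at $y_{0}$ and $\phi$ is nonlocally concave at $x_{0}$ with the same absolute increments.

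The contradiction is then extracted by applying the viscosity subsolution property to $\psi$ at $y_{0}$ and the supersolution property to $\phi$ at $x_{0}$ in the double obstacle equation (\ref{eq: dbl obstcl}), combined with the Lipschitz inequalities (\ref{eq: rho Lip}) for $\rho$ and $-\bar\rho$: these reduce the contact cases $u(y_{0})=\rho(y_{0})$ and $u(x_{0})=-\bar\rho(x_{0})$ directly back to $M\le0$ via the chain $u(y_{0})\le\rho(y_{0})\le\rho(x_{0})+\gamma(y_{0}-x_{0})\le u(x_{0})+\gamma(y_{0}-x_{0})+[\rho(x_{0})-u(x_{0})]$ (and symmetrically with the lower obstacle), so that only the non-contact regime $-Iu=0$ remains, where the ellipticity (\ref{eq: I elliptic}) together with $\delta\psi(y_{0},\cdot)+\delta\phi(x_{0},\cdot)=0$ forces a sign conflict incompatible with $M>0$. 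The main obstacle in executing this last step is that $\gamma$ is only Lipschitz and convex, never assumed $C^{2}$ (since no regularity is placed on $\partial K$ or $\partial K^{\circ}$), so $\phi$ and $\psi$ are not admissible classical test functions; this must be handled by approximating $K^{\circ}$ by smooth strictly convex bodies $K^{\circ}_{\varepsilon}$, working with the mollified gauges $\gamma_{\varepsilon}$, and passing to the limit with the help of the uniform ellipticity bounds in (\ref{eq: I elliptic}) and the translation invariance of $I$.
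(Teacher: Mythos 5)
The two peripheral steps of your plan match the paper: you reduce the claim to a global difference estimate $u(x+h)-u(x)\le\gamma(h)$ and then differentiate along rays using $C^{1}$ regularity and the duality $\gamma^{\circ}(p)=\sup_{h\ne0}\langle p,h\rangle/\gamma(h)$. The paper proves the same difference estimate and differentiates in exactly this way. The divergence is in \emph{how} the difference estimate is obtained, and there is a genuine gap in your route.

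The crux of your proposal is that in the non-contact regime ($u(x_{0})<\rho(x_{0})$ and $u(y_{0})>-\bar{\rho}(y_{0})$) the pair of viscosity inequalities at $x_{0}$ and $y_{0}$ ``forces a sign conflict.'' It does not. Your own computation shows $\delta\phi(x_{0},z)\le0\le\delta\psi(y_{0},z)$ for every $z$. But then, by uniform ellipticity with $I(0)=0$, $I\phi(x_{0})\le M^{+}\phi(x_{0})\le0$ and $I\psi(y_{0})\ge M^{-}\psi(y_{0})\ge0$ hold for \emph{any} translation-invariant operator $I$ in the class, regardless of what $u$ does. In other words, the supersolution inequality at $x_{0}$ and the subsolution inequality at $y_{0}$ are satisfied automatically by these particular test functions; they carry no information about $u$. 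The two inequalities $I\phi(x_{0})\le0\le I\psi(y_{0})$ together with $\delta\psi(y_{0},\cdot)=-\delta\phi(x_{0},\cdot)$ are mutually consistent (for instance, all quantities can be zero), so there is nothing to contradict $M>0$. Smoothing $K^{\circ}$ does not repair this, because the sign structure of the increments, not the regularity of $\gamma$, is what kills the argument. There are also secondary issues you would need to address: the test functions $\phi,\psi$ grow linearly and so are not ``bounded'' as required by the definition of viscosity solution (fixable by truncation), and the case in which exactly one of $x_{0},y_{0}$ falls in $\mathbb{R}^{n}\setminus U$ is not covered by your case analysis, since then only one of the two viscosity inequalities is available.

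The paper's mechanism is different and does close. It fixes $h$ and works directly with $w:=u(\cdot+h)-u(\cdot)$. On the set where $u<\rho$ the function $u$ is a supersolution of $-Iu\ge0$, and on the set where $u(\cdot+h)>-\bar{\rho}(\cdot+h)$ the translate $u(\cdot+h)$ is a subsolution of $-Iu(\cdot+h)\le0$ (using translation invariance). Subtracting and using ellipticity via Lemma 5.8 of \citep{caffarelli2009regularity} gives $M^{+}w\ge0$ in the viscosity sense on the intersection $V_{1}$ of those two sets. Then the Pucci maximum principle (Lemma 5.10 of \citep{caffarelli2009regularity}) gives $\sup_{V_{1}}w\le\sup_{\mathbb{R}^{n}\setminus V_{1}}w$, and on $\mathbb{R}^{n}\setminus V_{1}$ the inequality $w\le\gamma(h)$ is read off from the Lipschitz property (\ref{eq: rho Lip}) of the obstacles, which coincide with $u$ there. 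The contradiction you want is hiding inside that maximum principle for $M^{+}$, not inside a pair of test-function inequalities for $I$. If you wish to keep a doubling flavor, the function to which the doubling should be applied is $w$ against the constant $\gamma(h)$, not $u$ against itself.
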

\begin{rem*}
An immediate consequence of this lemma is that $u\in C^{0,1}(\overline{U})$,
since $u$ and its derivative are bounded, and $\partial U$ is smooth
enough.
\end{rem*}
\begin{proof}
On the open set $V:=\{u<\rho\}$%
{} we know that $-Iu\ge0$ in the viscosity sense. Also, on the open
set $V_{0}:=\{-\bar{\rho}<u\}$ we know that $-Iu\le0$ in the viscosity
sense. Hence due to the translation invariance of $I$, for any $h$
we have $-Iu(\cdot+h)\le0$ in the viscosity sense in $V_{0}-h$.
Therefore by Lemma 5.8 of \citep{caffarelli2009regularity} we have
\[
M^{+}(u(\cdot+h)-u(\cdot))\ge0
\]
in the viscosity sense in $V_{1}:=V\cap(V_{0}-h)$. Thus by Lemma
5.10 of \citep{caffarelli2009regularity} we have 
\begin{equation}
\sup_{V_{1}}(u(\cdot+h)-u(\cdot))\le\sup_{\R^{n}-V_{1}}(u(\cdot+h)-u(\cdot)).\label{eq: max prncpl Du}
\end{equation}
Let us estimate $u(\cdot+h)-u(\cdot)$ on $\R^{n}-V_{1}$. Let $x\in\R^{n}-V_{1}$.
Then either $x\notin V$, or $x+h\notin V_{0}$. Suppose $x\notin V$.
Then $u(x)=\rho(x)$ (note that outside of $U$ we also have $u=\varphi=\rho$).
So 
\[
u(x+h)-u(x)\le\rho(x+h)-\rho(x)\le\gamma(h).
\]
Next suppose $x+h\notin V_{0}$. Then $u(x+h)=-\bar{\rho}(x+h)$.
Hence we have 
\[
u(x+h)-u(x)\le-(\bar{\rho}(x+h)-\bar{\rho}(x))=\bar{\rho}(x)-\bar{\rho}(x+h)\le\bar{\gamma}(-h)=\gamma(h).
\]
Therefore by (\ref{eq: max prncpl Du}) for every $x\in V_{1}$, and
hence for every $x\in\R^{n}$, we have $u(x+h)-u(x)\le\gamma(h)$.
Hence for $t>0$ we get 
\[
\frac{u(x+th)-u(x)}{t}\le\frac{1}{t}\gamma(th)=\gamma(h).
\]
Thus at the points of differentiability of $u$ we have $\langle Du,h\rangle\le\gamma(h)$,
and consequently $\gamma^{\circ}(Du)\le1$ due to (\ref{eq: gen Cauchy-Schwartz 2}). 
\end{proof}
\begin{lem}
\label{lem: u<v}Suppose Assumption \ref{assu: all} holds. Also suppose%
{} there is a bounded continuous function $-\bar{\rho}\le v\le\rho$
that satisfies $-Iv\le0$ in the viscosity sense in $U$. Let $u$
be a viscosity solution of the double obstacle problem (\ref{eq: dbl obstcl}).
Then we have 
\[
v\le u.
\]
As a result we get 
\begin{equation}
\max\{-Iu,\;u-\rho\}=0\label{eq: obs}
\end{equation}
in the viscosity sense in $U$.
\end{lem}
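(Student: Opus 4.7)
The plan is to mimic the strategy of Lemma \ref{lem: g(Du)<1}, using the comparison principle for Pucci operators on the non-coincidence set for $u$. Let $V' := \{x \in \R^{n} : u(x) < \rho(x)\}$; note that $V' \subset U$ since outside $U$ we have $u = \varphi = \rho$. On $V'$, the supersolution part of the double obstacle problem forces $\min\{-Iu,\, u + \bar{\rho}\} \ge 0$ in the viscosity sense, and in particular $-Iu \ge 0$ on $V'$. Combined with the hypothesis $-Iv \le 0$ in $U$, Lemma 5.8 of \citep{caffarelli2009regularity} yields $M^{+}(v - u) \ge 0$ in the viscosity sense on $V'$.

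Applying the maximum principle Lemma 5.10 of \citep{caffarelli2009regularity} gives
\[
\sup_{V'}(v - u) \;\le\; \sup_{\R^{n}\setminus V'}(v - u).
\]
I distinguish two cases on $\R^{n}\setminus V'$: if $x \notin U$, then $-\bar{\rho}(x) = \varphi(x) = \rho(x)$, so the pinch $-\bar{\rho} \le v \le \rho$ forces $v(x) = \varphi(x) = u(x)$; if $x \in U$ with $u(x) = \rho(x)$, then $v(x) \le \rho(x) = u(x)$. Either way $v - u \le 0$ on $\R^{n}\setminus V'$, giving $v \le u$ on $V'$ and hence on all of $\R^{n}$, which is the first assertion.

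To derive $\max\{-Iu,\, u-\rho\} = 0$ in the viscosity sense, I verify both inequalities. For the supersolution: if $\phi$ is a bounded $C^{2}$ test function with $u - \phi$ attaining a minimum at $x_{0}\in U$ (normalized so that $u(x_{0})=\phi(x_{0})$), and if $u(x_{0}) < \rho(x_{0})$, the double obstacle supersolution gives $\min\{-I\phi(x_{0}),\, u(x_{0}) + \bar{\rho}(x_{0})\} \ge 0$, whence $-I\phi(x_{0}) \ge 0$. For the subsolution: if $u - \phi$ attains a maximum at $x_{0}\in U$ with $u(x_{0})=\phi(x_{0})$, the double obstacle subsolution gives $u(x_{0}) \le \rho(x_{0})$ and $\min\{-I\phi(x_{0}),\, u(x_{0}) + \bar{\rho}(x_{0})\} \le 0$; when $u(x_{0}) + \bar{\rho}(x_{0}) > 0$ the minimum can only be attained by $-I\phi(x_{0})$, so $-I\phi(x_{0}) \le 0$ follows automatically.

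The main obstacle is the borderline subsolution case $u(x_{0}) + \bar{\rho}(x_{0}) = 0$, where the min-inequality no longer forces $-I\phi(x_{0}) \le 0$ directly. Here the auxiliary function $v$ intervenes decisively: the first assertion together with $v \ge -\bar{\rho}$ yields
\[
v(x_{0}) \le u(x_{0}) = -\bar{\rho}(x_{0}) \le v(x_{0}),
\]
so $v(x_{0}) = u(x_{0}) = \phi(x_{0})$. Since $v \le u \le \phi$ globally with equality at $x_{0}$, the test function $\phi$ also touches $v$ from above at $x_{0}$, and the hypothesis $-Iv \le 0$ then yields $-I\phi(x_{0}) \le 0$, closing the argument.
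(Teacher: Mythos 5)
Your proposal is correct and follows essentially the same route as the paper. The only cosmetic difference is in the first part: you derive $v\le u$ by passing through $M^{+}(v-u)\ge0$ (Lemma 5.8) and then the maximum principle (Lemma 5.10), whereas the paper simply invokes the comparison principle (Theorem 5.2 of \citep{caffarelli2009regularity}) directly; these are the same mechanism, and your case analysis on $\R^{n}\setminus V'$ and the treatment of the borderline coincidence point $u(x_{0})=-\bar{\rho}(x_{0})$ via the touching argument for $v$ match the paper's proof.
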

\begin{rem*}
In fact, this lemma is still true if we replace $\rho,-\bar{\rho}$
by any other upper and lower obstacles which agree on $\R^{n}-U$.
We can also replace $I\,\cdot$ by $I\cdot-f$ for some continuous
function $f$. 
\end{rem*}
\begin{proof}
On the open set $V:=\{u<\rho\}\subset U$ we have $-Iu\ge0$ and $-Iv\le0$
in the sense of viscosity. Also note that on $\R^{n}-V$ we have $u=\rho\ge v$.
Hence by Theorem 5.2 of \citep{caffarelli2009regularity} (also see
Lemma 6.1 of \citep{mou2015uniqueness}) we have $u\ge v$ on $V$
too, as desired. 

Now let us prove (\ref{eq: obs}). Suppose $u-\phi$ has a global
maximum at $x_{0}\in U$. Then at $x_{0}$ we have 
\[
\max\{\min\{-I\phi(x_{0}),\;u+\bar{\rho}\},u-\rho\}\le0.
\]
We know that $-\bar{\rho}(x_{0})\le u(x_{0})\le\rho(x_{0})$. If $-\bar{\rho}(x_{0})<u(x_{0})$
then we must have $-I\phi(x_{0})\le0$. And if $-\bar{\rho}(x_{0})=u(x_{0})$
then $v(x_{0})=-\bar{\rho}(x_{0})=u(x_{0})$, since $-\bar{\rho}\le v\le u$.
Hence for every $x\in\R^{n}$ we have 
\[
v(x_{0})-\phi(x_{0})=u(x_{0})-\phi(x_{0})\ge u(x)-\phi(x)\ge v(x)-\phi(x).
\]
So $v-\phi$ has a global maximum at $x_{0}$, and therefore $-I\phi(x_{0})\le0$.
Thus in either case we have 
\[
\max\{-I\phi(x_{0}),\;u-\rho\}\le0.
\]

Next suppose $u-\phi$ has a global minimum at $x_{0}\in U$. Then
at $x_{0}$ we have 
\[
\max\{\min\{-I\phi(x_{0}),\;u+\bar{\rho}\},u-\rho\}\ge0.
\]
So if $u(x_{0})<\rho(x_{0})$ then we must have $-I\phi(x_{0})\ge0$,
which implies that 
\[
\max\{-I\phi(x_{0}),\;u-\rho\}\ge0.
\]
\end{proof}
\begin{proof}[\textbf{Proof of Theorem \ref{thm: dbl obstcl =00003D PDE grad}}]
\label{proof Thm 2} Suppose $u-\phi$ has a global minimum at $x_{0}\in U$.
Then Lemma \ref{lem: u<v} implies that at $x_{0}$ we have 
\[
\max\{-I\phi(x_{0}),\;u-\rho\}\ge0.
\]
We need to show that 
\[
\max\{-I\phi(x_{0}),\;\gamma^{\circ}(D\phi(x_{0}))-1\}\ge0.
\]
If $-I\phi(x_{0})\ge0$ then we have the desired. Otherwise we must
have $u(x_{0})=\rho(x_{0})$. Let $y_{0}\in\partial U$ be a $\rho$-closest
point to $x_{0}$, i.e. $\rho(x_{0})=\gamma(x_{0}-y_{0})+\varphi(y_{0})$.
Then by Lemma \ref{lem: segment to the closest pt}, $y_{0}$ is also
a $\rho$-closest point on $\partial U$ to $x_{0}+t(y_{0}-x_{0})$
for $t\in[0,1]$. So we have 
\begin{align*}
\rho(x_{0}+t(y_{0}-x_{0})) & =\gamma(x_{0}+t(y_{0}-x_{0})-y_{0})+\varphi(y_{0})\\
 & =(1-t)\gamma(x_{0}-y_{0})+\varphi(y_{0}).
\end{align*}
So $u(x_{0}+t(y_{0}-x_{0}))\le\rho(x_{0}+t(y_{0}-x_{0}))=(1-t)\gamma(x_{0}-y_{0})+\varphi(y_{0})$.
On the other hand, for small negative $t$ we have $\rho(x_{0}+t(y_{0}-x_{0}))\le(1-t)\gamma(x_{0}-y_{0})+\varphi(y_{0})$,
since $y_{0}$ may not be a $\rho$-closest point on $\partial U$
to $x_{0}+t(y_{0}-x_{0})$. Hence for $t$ near $0$ we have 
\[
u(x_{0}+t(y_{0}-x_{0}))\le(1-t)\gamma(x_{0}-y_{0})+\varphi(y_{0}).
\]
And the equality holds at $t=0$. Thus by differentiating we get $\langle Du(x_{0}),x_{0}-y_{0}\rangle=\gamma(x_{0}-y_{0})$.
Therefore by (\ref{eq: gen Cauchy-Schwartz 2}) we get $\gamma^{\circ}(D\phi(x_{0}))\ge1$,
as desired.

Next suppose $u-\phi$ has a global maximum at $x_{0}\in U$. We need
to show that 
\[
\max\{-I\phi(x_{0}),\;\gamma^{\circ}(D\phi(x_{0}))-1\}\le0.
\]
By Lemma \ref{lem: u<v} we know that at $x_{0}$ we have 
\[
\max\{-I\phi(x_{0}),\;u-\rho\}\le0.
\]
So $-I\phi(x_{0})\le0$; and we only need to show that $\gamma^{\circ}(D\phi(x_{0}))\le1$.
However we know that 
\[
D\phi(x_{0})=Du(x_{0}),
\]
since $u$ is $C^{1}$ in $U$. Hence we get the desired by Lemma
\ref{lem: g(Du)<1}.
\end{proof}

\subsection{\label{subsec: Reg gaug}Regularity of the gauge function}

In this subsection we review some of the properties of $\gamma,\gamma^{\circ}$
and $K,K^{\circ}$ briefly. For detailed explanations and proofs see
\citep{MR3155183}. Recall that the gauge function $\gamma$ satisfies
\begin{align*}
 & \gamma(rx)=r\gamma(x),\\
 & \gamma(x+y)\le\gamma(x)+\gamma(y),
\end{align*}
for all $x,y\in\mathbb{R}^{n}$ and $r\ge0$. Also, note that as $B_{1/C}(0)\subseteq K\subseteq B_{1/c}(0)$
for some $C\ge c>0$, we have 
\begin{equation}
c|x|\le\gamma(x)\le C|x|\label{eq: c<g<C}
\end{equation}
for all $x\in\mathbb{R}^{n}$. In addition, since $K$ is closed we
have $K=\{\gamma\le1\}$, and since $K$ has nonempty interior we
have $\partial K=\{\gamma=1\}$. 

It is well known that for all $x,y\in\mathbb{R}^{n}$ we have 
\begin{equation}
\langle x,y\rangle\leq\gamma(x)\gamma^{\circ}(y).\label{eq: gen Cauchy-Schwartz}
\end{equation}
In fact, more is true and we have 
\begin{equation}
\gamma^{\circ}(y)=\underset{x\ne0}{\max}\frac{\langle x,y\rangle}{\gamma(x)}.\label{eq: gen Cauchy-Schwartz 2}
\end{equation}
For a proof of this see page 54 of \citep{MR3155183}.

It is easy to see that the the strict convexity of $K$ (which means
that $\partial K$ does not contain any line segment) is equivalent
to the strict convexity of $\gamma$. By homogeneity of $\gamma$
the latter is equivalent to 
\[
\gamma(x+y)<\gamma(x)+\gamma(y)
\]
when $x\ne cy$ and $y\ne cx$ for any $c\ge0$.

Suppose that $\partial K$ is $C^{k,\alpha}$ $(k\ge1\,,\,0\le\alpha\le1)$.
Then $\gamma$ is $C^{k,\alpha}$ on $\mathbb{R}^{n}-\{0\}$ (see
for example \citep{SAFDARI202176}). Conversely, note that as $\partial K=\{\gamma=1\}$
and $D\gamma\ne0$ by (\ref{eq: g0 (Dg)=00003D1}), $\partial K$
is as smooth as $\gamma$. Suppose in addition that $K$ is strictly
convex. Then $\gamma$ is strictly convex too. By Remark 1.7.14 and
Theorem 2.2.4 of \citep{MR3155183}, $K^{\circ}$ is also strictly
convex and its boundary is $C^{1}$. Therefore $\gamma^{\circ}$ is
strictly convex, and it is $C^{1}$ on $\mathbb{R}^{n}-\{0\}$. Furthermore,
by Corollary 1.7.3 of \citep{MR3155183}, for $x\ne0$ we have 
\begin{eqnarray}
D\gamma(x)\in\partial K^{\circ}, &  & D\gamma^{\circ}(x)\in\partial K,\label{eq: g0 (Dg)=00003D1}
\end{eqnarray}
or equivalently 
\[
\gamma^{\circ}(D\gamma)=1,\qquad\gamma(D\gamma^{\circ})=1.
\]
In particular $D\gamma,D\gamma^{\circ}$ are nonzero on $\mathbb{R}^{n}-\{0\}$.

Now assume that $k\ge2$ and the principal curvatures of $\partial K$
are positive everywhere. Then $K$ is strictly convex. We can also
show that $\gamma^{\circ}$ is $C^{k,\alpha}$ on $\mathbb{R}^{n}-\{0\}$.
To see this let $n_{K}:\partial K\to\mathbb{S}^{n-1}$ be the Gauss
map, i.e. let $n_{K}(y)$ be the outward unit normal to $\partial K$
at $y$. Then $n_{K}$ is $C^{k-1,\alpha}$ and its derivative is
an isomorphism at the points with positive principal curvatures, i.e.
everywhere. Hence $n_{K}$ is locally invertible with a $C^{k-1,\alpha}$
inverse $n_{K}^{-1}$, around any point of $\mathbb{S}^{n-1}$. Now
note that as it is well known, $\gamma^{\circ}$ equals the support
function of $K$, i.e. 
\[
\gamma^{\circ}(x)=\sup\{\langle x,y\rangle:y\in K\}.
\]
Thus as shown in page 115 of \citep{MR3155183}, for $x\ne0$ we have
\[
D\gamma^{\circ}(x)=n_{K}^{-1}(\frac{x}{|x|}).
\]
Which gives the desired result. As a consequence, $\partial K^{\circ}$
is $C^{k,\alpha}$ too. Furthermore, as shown on page 120 of \citep{MR3155183},
the principal curvatures of $\partial K^{\circ}$ are also positive
everywhere.

Let us recall a few more properties of $\gamma,\gamma^{\circ}$. Since
they are positively 1-homogeneous, $D\gamma,D\gamma^{\circ}$ are
positively 0-homogeneous, and $D^{2}\gamma,D^{2}\gamma^{\circ}$ are
positively $(-1)$-homogeneous, i.e. 
\begin{eqnarray}
\gamma(tx)=t\gamma(x), & D\gamma(tx)=D\gamma(x), & D^{2}\gamma(tx)=\frac{1}{t}D^{2}\gamma(x),\nonumber \\
\gamma^{\circ}(tx)=t\gamma^{\circ}(x), & D\gamma^{\circ}(tx)=D\gamma^{\circ}(x), & D^{2}\gamma^{\circ}(tx)=\frac{1}{t}D^{2}\gamma^{\circ}(x),\label{eq: homog}
\end{eqnarray}
for $x\ne0$ and $t>0$. As a result, using Euler's theorem on homogeneous
functions we get 
\begin{eqnarray}
\langle D\gamma(x),x\rangle=\gamma(x), &  & D^{2}\gamma(x)\,x=0,\nonumber \\
\langle D\gamma^{\circ}(x),x\rangle=\gamma^{\circ}(x), &  & D^{2}\gamma^{\circ}(x)\,x=0,\label{eq: Euler formula}
\end{eqnarray}
for $x\ne0$. Here $D^{2}\gamma(x)\,x$ is the action of the matrix
$D^{2}\gamma(x)$ on the vector $x$. 

Finally let us mention that by Corollary 2.5.2 of \citep{MR3155183},
when $x\ne0$, the eigenvalues of $D^{2}\gamma(x)$ are $0$ with
the corresponding eigenvector $x$, and $\frac{1}{|x|}$ times the
principal radii of curvature of $\partial K^{\circ}$ at the unique
point that has $x$ as an outward normal vector. Remember that the
principal radii of curvature are the reciprocals of the principal
curvatures. Thus by our assumption, the eigenvalues of $D^{2}\gamma(x)$
are all positive except for one $0$. We have a similar characterization
of the eigenvalues of $D^{2}\gamma^{\circ}(x)$.

\subsection{\label{subsec: Reg Opstacles}Regularity of the obstacles}

Next let us consider the obstacles $\rho,-\bar{\rho}$, and review
some of their properties. All the results of this subsection are proved
in \citep{SAFDARI202176}.
\begin{defn}
When $\rho(x)=\gamma(x-y)+\varphi(y)$ for some $y\in\partial U$,
we call $y$ a \textbf{$\boldsymbol{\rho}$-closest} point to $x$
on $\partial U$. Similarly, when $\bar{\rho}(x)=\gamma(y-x)-\varphi(y)$
for some $y\in\partial U$, we call $y$ a \textbf{$\boldsymbol{\bar{\rho}}$-closest}
point to $x$ on $\partial U$.
\end{defn}
Let us also introduce some more notation. For two points $x,y\in\R^{n}$,
$[x,y],\,]x,y[,\,[x,y[,\,]x,y]$ will denote the closed, open, and
half-open line segments with endpoints $x,y$, respectively. 
\begin{lem}
\label{lem: segment to the closest pt}Suppose $y$ is one of the
$\rho$-closest points on $\partial U$ to $x\in U$. Then 
\begin{enumerate}
\item[\upshape{(a)}] $y$ is a $\rho$-closest point on $\partial U$ to every point of
$]x,y[$. Therefore $\rho$ varies linearly along the line segment
$[x,y]$.
\item[\upshape{(b)}] If in addition, for all $x\ne y\in\R^{n}$ we have 
\begin{equation}
-\gamma(y-x)<\varphi(x)-\varphi(y)<\gamma(x-y),\label{eq: phi strct Lip}
\end{equation}
then we also have $]x,y[\subset U$.
\item[\upshape{(c)}] If in addition $\gamma$ is strictly convex, and the strict Lipschitz
property (\ref{eq: phi strct Lip}) for $\varphi$ holds, then $y$
is the unique $\rho$-closest point on $\partial U$ to the points
of $]x,y[$.
\end{enumerate}
\end{lem}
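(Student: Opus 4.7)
The plan is to exploit the triangle inequality (subadditivity) of the gauge $\gamma$, together with the strict inequality in (\ref{eq: phi strct Lip}) and the strict convexity of $K$, working sequentially through parts (a), (b), (c), with each part feeding the next.

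For part (a), parametrize $z=x+t(y-x)$ with $t\in(0,1)$, so that $x-z=t(x-y)$ and $z-y=(1-t)(x-y)$; by positive homogeneity, $\gamma(x-z)=t\gamma(x-y)$ and $\gamma(z-y)=(1-t)\gamma(x-y)$. For any competitor $y'\in\partial U$, subadditivity gives $\gamma(x-y')\le\gamma(x-z)+\gamma(z-y')$, and since $y$ is $\rho$-closest to $x$ we have $\gamma(x-y')+\varphi(y')\ge\rho(x)=\gamma(x-y)+\varphi(y)$. Rearranging,
\[
\gamma(z-y')+\varphi(y')\ge\gamma(x-y)+\varphi(y)-\gamma(x-z)=(1-t)\gamma(x-y)+\varphi(y)=\gamma(z-y)+\varphi(y),
\]
with equality achieved at $y'=y$. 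Hence $y$ is a $\rho$-closest point to $z$, and $\rho(z)=(1-t)\gamma(x-y)+\varphi(y)$ is linear in $t$ along $[x,y]$.

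For part (b), argue by contradiction: if some $z_0\in\,]x,y[$ fails to lie in $U$, then either $z_0\in\partial U$ outright, or $z_0\notin\overline U$, in which case the segment $[x,z_0]$ crosses $\partial U$ by the intermediate value theorem applied to the signed distance to $\partial U$ along the segment, producing $z_1\in\,]x,y[\,\cap\partial U$. For such a boundary point, the extension convention gives $\rho(z_1)=\varphi(z_1)$, while part (a) gives $\rho(z_1)=\gamma(z_1-y)+\varphi(y)$; comparing yields $\varphi(z_1)-\varphi(y)=\gamma(z_1-y)$, contradicting the strict Lipschitz hypothesis (\ref{eq: phi strct Lip}) since $z_1\ne y$.

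For part (c), suppose $y'\in\partial U$ is another $\rho$-closest point to some $z\in\,]x,y[$. Chaining the triangle inequality through $z$,
\[
\gamma(x-y')+\varphi(y')\le\gamma(x-z)+\gamma(z-y')+\varphi(y')=\gamma(x-z)+\rho(z)=\rho(x),
\]
using the linear formula from part (a); so $y'$ is also $\rho$-closest to $x$, and equality must hold in the triangle inequality $\gamma(x-y')=\gamma(x-z)+\gamma(z-y')$. Strict convexity of $\gamma$ then forces $x-z$ and $z-y'$ to be nonnegative scalar multiples of one another, and since $x-z$ is a positive multiple of $y-x$, a short computation yields $y'=x+s(y-x)$ for some $s>0$. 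If $0<s<1$ then $y'\in\,]x,y[\,\subset U$ by (b), contradicting $y'\in\partial U$; if $s>1$ then, because $y'$ is $\rho$-closest to $x$, part (b) applied with $y'$ in place of $y$ gives $y\in\,]x,y'[\,\subset U$, again contradicting $y\in\partial U$. Hence $s=1$ and $y'=y$. The main obstacle is the topological step in (b): the segment $]x,y[$ need not a priori lie in $\overline U$ when $U$ is nonconvex, and it is the intermediate value theorem that produces the crossing point $z_1\in\partial U$ which activates the contradiction with the strict Lipschitz property.
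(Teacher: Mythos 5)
Your proof is correct, and the argument (subadditivity of $\gamma$ for (a), a connectedness/boundary-value contradiction for (b), and the equality case of the triangle inequality plus strict convexity for (c)) is the natural one. The paper itself does not prove this lemma but cites it to \citep{SAFDARI202176}, so there is no in-paper proof to compare against; on its own merits your argument is sound. Two small points worth fixing for the write-up. In part (c) you write that $x-z$ is a positive multiple of $y-x$, but $x-z=-t(y-x)$ is a positive multiple of $x-y$; the conclusion is nonetheless right, since $z-y'=\beta(x-z)=-\beta t(y-x)$ with $\beta>0$ gives $y'=z+\beta t(y-x)=x+t(1+\beta)(y-x)$, i.e.\ $y'=x+s(y-x)$ with $s=t(1+\beta)>t>0$. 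Also worth a sentence: in applying the equality case of strict convexity you need $x-z\ne 0$ and $z-y'\ne 0$; the former holds since $z\ne x$, and the latter since $z\in\,]x,y[\,\subset U$ by (b) while $y'\in\partial U$. In part (b), rather than invoking a signed distance function one can simply note that $x\in U$ and $z_0\in\R^{n}\setminus\overline{U}$ lie in disjoint open sets, so the connected segment $[x,z_0]$ must meet $\partial U$; your IVT phrasing is also valid, since the signed distance to $\partial U$ is continuous for any nonempty proper open $U$.
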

Next, we generalize the notion of ridge introduced by \citet{MR0184503},
and \citet{MR534111}. Intuitively, the $\rho$-ridge is the set of
singularities of $\rho$.
\begin{defn}
\label{def: ridge}The \textbf{$\boldsymbol{\rho}$-ridge} of $U$
is the set of all points $x\in U$ where $\rho(x)$ is not $C^{1,1}$
in any neighborhood of $x$. We denote it by 
\[
R_{\rho}.
\]
We have shown that when $\gamma$ is strictly convex and the strict
Lipschitz property (\ref{eq: phi strct Lip}) for $\varphi$ holds,
the points with more than one $\rho$-closest point on $\partial U$
belong to $\rho$-ridge, since $\rho$ is not differentiable at them.
This subset of the $\rho$-ridge is denoted by 
\[
R_{\rho,0}.
\]
Similarly we define $R_{\bar{\rho}},R_{\bar{\rho},0}$.%
\end{defn}
We know that $\rho,\bar{\rho}$ are Lipschitz functions. We want to
characterize the set over which they are more regular. In order to
do that, we need to impose some additional restrictions on $K,U$
and $\varphi$.
\begin{assumption}
\label{assu: K,U}Suppose that $k\ge2$ is an integer, and $0\le\alpha\le1$.
We assume that 
\begin{enumerate}
\item[\upshape{(a)}] $K\subset\R^{n}$ is a compact convex set whose interior contains
the origin. In addition, $\partial K$ is $C^{k,\alpha}$, and its
principal curvatures are positive at every point.
\item[\upshape{(b)}] $U\subset\R^{n}$ is a bounded open set, and $\partial U$ is $C^{k,\alpha}$.
\item[\upshape{(c)}] $\varphi:\R^{n}\to\R$ is a $C^{k,\alpha}$ function, such that $\gamma^{\circ}(D\varphi)<1$.
\end{enumerate}
\end{assumption}
\begin{rem*}
As shown in Subsection \ref{subsec: Reg gaug}, the above assumption
implies that $K,\gamma$ are strictly convex. In addition, $K^{\circ},\gamma^{\circ}$
are strictly convex, and $\partial K^{\circ},\gamma^{\circ}$ are
also $C^{k,\alpha}$. Furthermore, the principal curvatures of $\partial K^{\circ}$
are also positive at every point. Similar conclusions obviously hold
for $-K,-\varphi$ and $(-K)^{\circ}=-K^{\circ}$ too. Hence in the
sequel, whenever we state a property for $\rho$, it holds for $\bar{\rho}$
too.\textcolor{red}{}
\end{rem*}
Let $\nu$ be the inward unit normal to $\partial U$. Then for every
$y\in\partial U$ there is a unique scalar $\lambda(y)>0$ such that
\begin{equation}
\gamma^{\circ}\big(D\varphi(y)+\lambda(y)\nu(y)\big)=1.\label{eq: lambda}
\end{equation}
In addition, $\lambda$ is a $C^{k-1,\alpha}$ function of $y$. We
set 
\begin{equation}
\mu(y):=D\varphi(y)+\lambda(y)\nu(y).\label{eq: mu}
\end{equation}
We also set 
\begin{equation}
X:=\frac{1}{\langle D\gamma^{\circ}(\mu),\nu\rangle}D\gamma^{\circ}(\mu)\otimes\nu,\label{eq: X}
\end{equation}
where $a\otimes b$ is the rank 1 matrix whose action on a vector
$z$ is $\langle z,b\rangle a$. We also know that 
\begin{equation}
\langle D\gamma^{\circ}(\mu),\nu\rangle>0.\label{eq: Dg(mu) . nu >0}
\end{equation}

Let $x\in U$, and suppose $y$ is one of the $\rho$-closest points
to $x$ on $\partial U$. Then we have 
\begin{equation}
\frac{x-y}{\gamma(x-y)}=D\gamma^{\circ}(\mu(y))\label{eq: K-normal}
\end{equation}
(see Figure \ref{fig: 1}), or equivalently 
\begin{equation}
x=y+\big(\rho(x)-\varphi(y)\big)\,D\gamma^{\circ}(\mu(y)).\label{eq: parametrize by rho}
\end{equation}
Also, $\rho$ is differentiable at $x$ if and only if $x\in U-R_{\rho,0}$.
And in that case we have 
\begin{equation}
D\rho(x)=\mu(y),\label{eq: D rho (x)}
\end{equation}
where $y$ is the unique $\rho$-closest point to $x$ on $\partial U$.

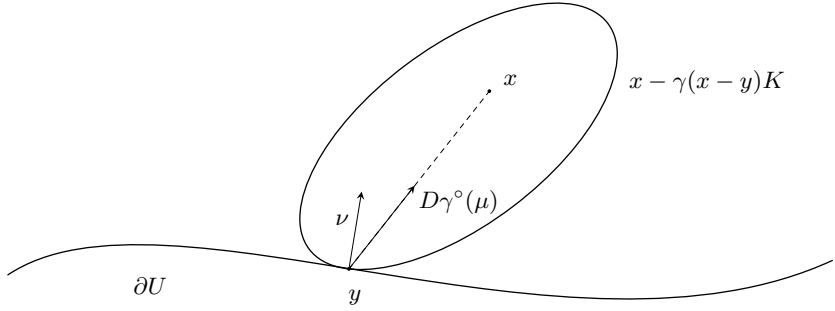
\begin{figure}

\begin{tikzpicture}[line cap=round,line join=round,>=triangle 45,x=1.0cm,y=1.0cm] 

\clip(-5,-1) rectangle (8.,4.5); 

\draw [rotate around={-30:(0.74,0.509615431964962)}, rotate around={-112.9124476060207:(1.1199857406279552,2.7658228013209634)},line width=0.5pt] (1.1199857406279552,2.7658228013209634) ellipse (2.489354745523695cm and 1.1806387377334575cm); 

\draw [rotate around={-21:(0.74,0.509615431964962)}, line width=0.3pt,dash pattern=on 2pt off 2pt] (0.74,0.509615431964962)-- (1.6350559584448812,3.3863653478797637); 

\draw [rotate around={-21:(0.74,0.509615431964962)}, ->,line width=0.3pt, >=stealth] (0.74,0.509615431964962) -- (1.1591475422042112,1.8567743949496367); 

\draw [rotate around={-28:(0.74,0.509615431964962)}, ->,line width=0.3pt, >=stealth] (0.74,0.509615431964962) -- (0.40621183416745316,1.4974466417936942); 

\draw [rotate around={-18:(0.74,0.509615431964962)}, line width=0.5pt]  (-3.5479974267239784,-0.9673560492307696).. controls (-2,1) and (4,0) ..  (6.8174727583718076,2.6029594918089893);

\begin{scriptsize} 

\draw [fill=black] (0.74,0.509615431964962) circle (0.5pt); 

\draw[color=black] (0.82,0.13) node {$y$};

\draw[rotate around={-21:(0.74,0.509615431964962)}, color=black] (1.84,3.63) node {$x$}; 

\draw [rotate around={-21:(0.74,0.509615431964962)}, fill=black] (1.6350559584448812,3.3863653478797637) circle (0.5pt);

\draw[color=black] (2.2,1.4) node {$D\gamma ^\circ (\mu)$}; 

\draw[rotate around={-30:(0.74,0.509615431964962)}, color=black] (0.34,1.03) node {$\nu$};

\draw[rotate around={-30:(0.74,0.509615431964962)}, color=black] (-1.44,-1) node {$\partial U$}; 

\draw[color=black] (5.5,3) node {$x - \gamma (x-y) K$}; 

\end{scriptsize} 
\end{tikzpicture} 

\caption{$y$ is a $\rho$-closest point to $x$.}
\label{fig: 1}
\end{figure}

In addition, for every $y\in\partial U$ there is an open ball $B_{r}(y)$
such that $\rho$ is $C^{k,\alpha}$ on $\overline{U}\cap B_{r}(y)$.
Furthermore, $y$ is the $\rho$-closest point to some points in $U$,
and we have 
\begin{equation}
D\rho(y)=\mu(y).\label{eq: D rho (y)}
\end{equation}
We also have 
\begin{equation}
D^{2}\rho(y)=(I-X^{T})\big(D^{2}\varphi(y)+\lambda(y)D^{2}d(y)\big)(I-X),\label{eq: D2 rho (y)}
\end{equation}
where $I$ is the identity matrix, $d$ is the Euclidean distance
to $\partial U$, and $X$ is given by (\ref{eq: X}).
\begin{rem*}
As a consequence, $R_{\rho}$ has a positive distance from $\partial U$.
\end{rem*}
Let $x\in U-R_{\rho,0}$, and let $y$ be the unique $\rho$-closest
point to $x$ on $\partial U$. Let 
\begin{align}
 & W=W(y):=-D^{2}\gamma^{\circ}(\mu(y))D^{2}\rho(y),\nonumber \\
 & Q=Q(x):=I-\big(\rho(x)-\varphi(y)\big)W,\label{eq: W,Q}
\end{align}
where $I$ is the identity matrix. If $\det Q\ne0$ then $\rho$ is
$C^{k,\alpha}$ on a neighborhood of $x$. In addition we have 
\begin{equation}
D^{2}\rho(x)=D^{2}\rho(y)Q(x)^{-1}.\label{eq: D2 rho (x)}
\end{equation}
We also have 
\begin{equation}
x\in R_{\rho}\textrm{ if and only if }\det Q(x)=0.\label{eq: ridge =00003D Q=00003D0}
\end{equation}

\begin{rem*}
When $\varphi=0$, the function $\rho$ is the distance to $\partial U$
with respect to the Minkowski distance defined by $\gamma$. So this
case has a geometric interpretation. An interesting fact is that in
this case the eigenvalues of $W$ coincide with the notion of curvature
of $\partial U$ with respect to some Finsler structure. For the details
see \citep{MR2336304}.
\end{rem*}
\begin{lem}
\label{lem: D2 rho decreas}Suppose the Assumption \ref{assu: K,U}
holds. Let $x\in U-R_{\rho}$, and let $y$ be the unique $\rho$-closest
point to $x$ on $\partial U$. Then we have 
\[
D_{\xi\xi}^{2}\rho(x)\le D_{\xi\xi}^{2}\rho(y)
\]
for every $\xi\in\R^{n}$.
\end{lem}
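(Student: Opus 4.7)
The plan is to parameterize the closest-point segment from $y$ to $x$ and show that $t \mapsto \langle \xi, D^2\rho(z(t))\xi\rangle$ is non-increasing for every direction $\xi$, by means of a matrix Riccati identity. Set $v := D\gamma^{\circ}(\mu(y))$ and $t_0 := \rho(x) - \varphi(y) = \gamma(x-y)$, and let $z(t) := y + tv$ for $t \in [0, t_0]$; then $z(0) = y$ and, by (\ref{eq: parametrize by rho}), $z(t_0) = x$. By Lemma \ref{lem: segment to the closest pt} together with the strict convexity of $\gamma$ granted by Assumption \ref{assu: K,U}, $y$ is the unique $\rho$-closest boundary point to every $z(t)$ on the segment, so (\ref{eq: D rho (x)}) gives $D\rho(z(t)) \equiv \mu(y)$. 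Writing $A := D^2\rho(y)$ and $B := D^2\gamma^{\circ}(\mu(y))$, the formula (\ref{eq: D2 rho (x)}) then reads
\[
H(t) \;:=\; D^2\rho(z(t)) \;=\; A\,(I + tBA)^{-1},
\]
valid whenever $\det(I + tBA) \ne 0$, which by (\ref{eq: ridge =00003D Q=00003D0}) is the same as $z(t) \notin R_\rho$.

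A direct differentiation of this identity yields the matrix Riccati equation
\[
H'(t) \;=\; -A\,(I+tBA)^{-1}\,BA\,(I+tBA)^{-1} \;=\; -H(t)\,B\,H(t).
\]
Since $H(t)$ is symmetric (as the Hessian of a $C^2$ function) and $B = D^2\gamma^{\circ}(\mu)$ is positive semidefinite by convexity of $\gamma^{\circ}$, for any fixed $\xi \in \R^n$ we have
\[
\frac{d}{dt}\bigl\langle \xi, H(t)\xi\bigr\rangle \;=\; -\bigl\langle H(t)\xi,\,B\,H(t)\xi\bigr\rangle \;\le\; 0.
\]
Integrating from $0$ to $t_0$ and using $H(0) = D^2\rho(y)$, $H(t_0) = D^2\rho(x)$ then gives the desired inequality $D^2_{\xi\xi}\rho(x) \le D^2_{\xi\xi}\rho(y)$.

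The main obstacle is to guarantee that the entire segment $z([0, t_0])$ stays in $U \setminus R_\rho$, so that $H(t)$ remains finite and the integration above is legitimate. By the remark after (\ref{eq: D2 rho (y)}), $R_\rho$ has positive distance from $\partial U$, so this holds for small $t > 0$; by hypothesis it also holds at $t = t_0$. Excluding intermediate singular times $t^* \in (0, t_0)$ with $\det(I + t^* BA) = 0$ is the delicate step, and corresponds to the standard metric-geometry principle that a length-minimizing geodesic cannot contain an interior focal point: since $y$ remains the unique $\rho$-closest boundary point throughout the segment, the first focal time along the ray from $y$ in direction $v$ cannot occur before the ray reaches $x$. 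This is exactly the kind of structural fact about the ridge that is developed, via the explicit formula (\ref{eq: D2 rho (x)}) and its consequences, in \citep{SAFDARI202176}, and one should be able to invoke it to close the argument.
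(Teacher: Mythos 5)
Your proof follows exactly the same idea as the paper's own exposition: along the characteristic segment $z(t)=y+tD\gamma^{\circ}(\mu(y))$ the Hessian of $\rho$ satisfies a Riccati-type ODE whose right-hand side is nonpositive because $\gamma^{\circ}$ is convex, so $D^{2}_{\xi\xi}\rho$ is nonincreasing in $t$. The only difference is computational: the paper obtains $\dot q = -\xi^{T}D^{2}\rho\,D^{2}\gamma^{\circ}\,D^{2}\rho\,\xi$ by twice differentiating the Hamilton--Jacobi equation $\gamma^{\circ}(D\rho)=1$, whereas you obtain the identical identity $H'(t)=-H(t)BH(t)$ by differentiating the closed-form expression $H(t)=A(I+tBA)^{-1}$ coming from (\ref{eq: D2 rho (x)}). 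These are the same Riccati structure; your route is a bit more concrete and tied to the explicit formula, while the paper's is stated at the level of a general convex Hamiltonian and is the version that extends to $\tilde H(x,v,Dv)=0$.

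The gap you flag at the end is real and is the genuinely nontrivial part of the argument. Lemma~\ref{lem: segment to the closest pt}(c) only gives that $z(t)\notin R_{\rho,0}$ (unique closest point), not that $z(t)\notin R_{\rho}$; so one still needs that $\det\big(I+tBA\big)$ has no zero in $(0,t_0)$, i.e.\ there is no focal point strictly between $y$ and $x$. This is precisely why the paper states, right after the Riccati sketch, that ``the assumptions in Lemma~\ref{lem: D2 rho decreas} are weaker than what we assume in the following calculations'' and refers the reader to the proof of Lemma~4 in \citep{SAFDARI202176} for the actual proof. So your deferral is exactly what the paper does too; just be aware that invoking that structural fact from the reference is not optional --- it is the heart of the lemma, and without it the integration of the Riccati inequality from $0$ to $t_0$ is not justified. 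Also note that the symmetry of $H(t)$, which you use to get the sign, itself rests on $\rho$ being $C^{2}$ at $z(t)$, hence again on $z(t)\notin R_{\rho}$.
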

As we mentioned in the introduction, the above monotonicity property
is true because $\rho$ satisfies the Hamilton-Jacobi equation (\ref{eq: H-J eq}),
and  the segment $]x,y[$ is the characteristic curve associated
to it. Let us review the general case of the monotonicity property
below. (Although note that the assumptions in Lemma \ref{lem: D2 rho decreas}
are weaker than what we assume in the following calculations. For
a proof of Lemma \ref{lem: D2 rho decreas} see the proof of Lemma
4 in \citep{SAFDARI202176}.)%

\textbf{Monotonicity of the second derivative of the solutions to
Hamilton-Jacobi equations:} Suppose $v$ satisfies the equation $\tilde{H}(Dv)=0$,
where $\tilde{H}$ is a convex function. Let $x(s)$ be a characteristic
curve of the equation. Then we have $\dot{x}=D\tilde{H}$. Let us
assume that $v$ is $C^{3}$ on a neighborhood of the image of $x(s)$.
For some vector $\xi$ let 
\[
q(s):=D_{\xi\xi}^{2}v(x(s))=\xi_{i}\xi_{j}D_{ij}^{2}v.
\]
Here we have used the convention of summing over repeated indices.
Then we have 
\[
\dot{q}=\xi_{i}\xi_{j}D_{ijk}^{3}v\,\dot{x}^{k}=\xi_{i}\xi_{j}D_{ijk}^{3}vD_{k}\tilde{H}.
\]
On the other hand, if we differentiate the equation we get $D_{k}\tilde{H}D_{ik}^{2}v=0$.
And if we differentiate one more time we get 
\[
D_{kl}^{2}\tilde{H}D_{jl}^{2}vD_{ik}^{2}v+D_{k}\tilde{H}D_{ijk}^{3}v=0.
\]
Now if we multiply the above expression by $\xi_{i}\xi_{j}$, and
sum over $i,j$, we obtain the following Riccati type equation 
\begin{equation}
\dot{q}=-\xi^{T}D^{2}v\,D^{2}\tilde{H}\,D^{2}v\xi.\label{eq: ODE D2 rho}
\end{equation}
So $\dot{q}\le0$, since $\tilde{H}$ is convex. Thus we have 
\[
D_{\xi\xi}^{2}v(x(s))=q(s)\le q(0)=D_{\xi\xi}^{2}v(x(0)),
\]
as desired. This result also holds in the more general case of $\tilde{H}(x,v,Dv)=0$,
when $\tilde{H}$ is a convex function in all of its arguments (see
\citep{SAFDARI202176}).

\section{\label{sec: Proof Thm 3}Proof of Theorem \ref{thm: Reg dbl obstcl}}

In this section we prove Theorem \ref{thm: Reg dbl obstcl}, i.e.
we will prove that the double obstacle problem (\ref{eq: dbl obstcl})
has a viscosity solution $u\in C_{\mathrm{loc}}^{1,\alpha}(U)$, without
assuming any regularity about $K$. Here we only consider the case
of zero exterior data. The case of general nonzero exterior data $\varphi$
is considered in Appendix \ref{sec: App A}.
\begin{proof}[\textbf{Proof of Theorem \ref{thm: Reg dbl obstcl} for zero exterior
data}]
\label{proof Thm 3} As it is well known, a compact convex set with
nonempty interior can be approximated, in the Hausdorff metric, by
a shrinking sequence of compact convex sets with nonempty interior
which have smooth boundaries with positive curvature (see for example
\citep{schmuckenschlaeger1993simple}). We apply this result to $K^{\circ}$.
Thus there is a sequence $K_{k}^{\circ}$ of compact convex sets,
that have smooth boundaries with positive curvature, and 
\begin{eqnarray*}
K_{k+1}^{\circ}\subset\mathrm{int}(K_{k}^{\circ}), & \qquad & K^{\circ}={\textstyle \bigcap}K_{k}^{\circ}.
\end{eqnarray*}
Notice that we can take the approximations of $K^{\circ}$ to be
the polar of other convex sets, because the double polar of a compact
convex set with $0$ in its interior is itself. Also note that $K_{k}$'s
are strictly convex compact sets with $0$ in their interior, which
have smooth boundaries with positive curvature. Furthermore we have
$K=(K^{\circ})^{\circ}\supset K_{k+1}\supset K_{k}$. For the proof
of these facts see {[}\citealp{MR3155183}, Sections 1.6, 1.7 and
2.5{]}.

To simplify the notation we use $\gamma_{k},\gamma_{k}^{\circ},\rho_{k},\bar{\rho}_{k}$
instead of $\gamma_{K_{k}},\gamma_{K_{k}^{\circ}},\rho_{K_{k},0},\bar{\rho}_{K_{k},0}$,
respectively. Note that $K_{k},U,\varphi=0$ satisfy the Assumption
\ref{assu: K,U}. Hence as we have shown in \citep{SAFDARI202176},
$\rho_{k},\bar{\rho}_{k}$ satisfy the assumptions of Theorem 1 of
\citep{Safd-Nonlocal-dbl-obst}. Thus there are viscosity solutions
$u_{k}\in C_{\mathrm{loc}}^{1,\alpha}(U)$ of the double obstacle
problem 
\begin{equation}
\begin{cases}
\max\{\min\{-Iu_{k},\;u_{k}+\bar{\rho}_{k}\},u_{k}-\rho_{k}\}=0 & \textrm{in }U,\\
u_{k}=0 & \textrm{in }\mathbb{R}^{n}-U.
\end{cases}\label{eq: dbl obstcl k}
\end{equation}
And $\alpha>0$ depends only on $n,\lambda,\Lambda,s_{0}$. 

In addition, we know that 
\begin{equation}
-\bar{\rho}_{1}\le-\bar{\rho}_{k}\le u_{k}\le\rho_{k}\le\rho_{1}.\label{eq: u_k bdd}
\end{equation}
Note that $\rho_{k}\le\rho_{1}$ and $\bar{\rho}_{k}\le\bar{\rho}_{1}$,
since $\gamma_{k}\le\gamma_{1}$ due to $K_{k}\supset K_{1}$. Let
us also show that $\rho_{k}\to\rho$ and $\bar{\rho}_{k}\to\bar{\rho}$
uniformly on $U$. It is easy to see that for nonzero $z$ we have
$\frac{z}{\gamma(z)}\in\partial K$. Each ray emanating from the origin
intersects $\partial K_{k},\partial K$ at a pair of points. Let $\delta_{k}$
be the maximum distance between these pairs of points. Then we have
\[
\big|\frac{z}{\gamma(z)}-\frac{z}{\gamma_{k}(z)}\big|\le\delta_{k}\implies|\gamma_{k}(z)-\gamma(z)|\le\delta_{k}\frac{\gamma(z)\gamma_{k}(z)}{|z|}\le C\delta_{k}|z|,
\]
where the last estimate is obtained by (\ref{eq: c<g<C}). Now let
$x\in U$, and let $y\in\partial U$ be a $\rho$-closest point to
$x$. Then we have 
\begin{equation}
|\rho_{k}(x)-\rho(x)|\le|\gamma_{k}(x-y)-\gamma(x-y)|\le C\delta_{k}|x-y|.\label{eq: rho_k - rho}
\end{equation}
Since $\delta_{k}\to0$ and $U$ is bounded we get the desired.

Furthermore, by Lemma \ref{lem: g(Du)<1} we have $\gamma_{k}^{\circ}(Du_{k})\le1$.
Hence we have 
\[
Du_{k}\in K_{k}^{\circ}\subset K_{1}^{\circ}\quad\textrm{ a.e.}
\]
Therefore $u_{k}$ is a bounded sequence in $W^{1,\infty}(U)=C^{0,1}(\overline{U})$.
Hence by the Arzela-Ascoli Theorem a subsequence of $u_{k}$, which
we still denote by $u_{k}$, uniformly converges to a continuous function
$u\in C^{0}(\overline{U})$. Note that $u|_{\partial U}=0$, because
$u_{k}|_{\partial U}=0$ for every $k$. We extend $u$ to all of
$\R^{n}$ by setting it equal to $0$ in $\R^{n}-U$. Note that $u$
is a continuous function. 

We divide the rest of this proof into three parts. In Part I we derive
the uniform bound (\ref{eq: I(u_e) bdd}), i.e. we show that $Iu_{k}$
is bounded independently of $k$ by using suitable barriers. This
is possible mainly for two reasons. First we will use the fact that
similarly to $D^{2}\rho_{k}$, the second derivative of the barrier
attains its maximum on the boundary; so we get a one-way bound for
$Iu_{k}$. For the other bound, we use the fact that $u_{k}$ is a
subsolution or a supersolution of $-I=0$ in the regions in which
$u_{k}$ touches one of the obstacles. In Part II we prove the main
properties of the barriers which we employed in Part I. Then in Part
III we show that a subsequence of $u_{k}$ converges to $u$ in $C_{\mathrm{loc}}^{1,\alpha}$,
and $u$ is a viscosity solution of the double obstacle problem (\ref{eq: dbl obstcl}).
Here we use the bound (\ref{eq: I(u_e) bdd}), obtained in Part I,
to show that the $C_{\mathrm{loc}}^{1,\alpha}$ norm of $u_{k}$ is
uniformly bounded, so that we can extract a convergent subsequence
of $u_{k}$.

\medskip{}

PART I:\medskip{}

Let us show that for every bounded open set $V\subset\subset U$ and
every $k$ we have 
\begin{equation}
-C_{V}\le Iu_{k}\le C_{V}\label{eq: I(u_e) bdd}
\end{equation}
in the viscosity sense in $V$, for some constant $C_{V}$ independent
of $k$. In addition, we can choose $C_{V}$ uniformly for $s>s_{0}$.

Suppose $\phi$ is a bounded $C^{2}$ function and $u_{k}-\phi$ has
a minimum over $\R^{n}$ at $x_{0}\in V$. We must show that 
\[
-I\phi(x_{0})\ge-C_{V}.
\]
We can assume that $u_{k}(x_{0})-\phi(x_{0})=0$ without loss of generality,
since we can consider $\phi+c$ instead of $\phi$ without changing
$I$ (because $M^{\pm}(c)=0$). So we can assume that $u_{k}-\phi\ge0$,
or $u_{k}\ge\phi$. We also know that at $x_{0}$ we have 
\[
\max\{\min\{-I\phi(x_{0}),u_{k}+\bar{\rho}_{k}\},u_{k}-\rho_{k}\}\ge0,
\]
since $u_{k}$ is a viscosity solution of (\ref{eq: dbl obstcl k}).
In addition remember that $-\bar{\rho}_{k}\leq u_{k}\leq\rho_{k}$.
Now if $u_{k}(x_{0})<\rho_{k}(x_{0})$ then we must have $-I\phi(x_{0})\ge0$.
And if $u_{k}(x_{0})=\rho_{k}(x_{0})$ then $\phi$ is also touching
$\rho_{k}$ from below at $x_{0}$, since $\phi\le u_{k}\le\rho_{k}$.
Now let $y_{0}\in\partial U$ be a $\rho_{k}$-closest point to
$x_{0}$. (Note that $y_{0}$ can depend on $k$.) Let $B\subset\R^{n}-\overline{U}$
be an open ball such that $\partial B\cap\partial U=\{y_{0}\}$. Note
that the radius of $B$ can be chosen to be independent of $y_{0}$,
since $\partial U$ is $C^{2}$. Now on the domain $\R^{n}-\overline{B}$
consider the function 
\begin{equation}
\rho_{k,B^{c}}:=\rho_{K_{k},0}(\cdot\,;\R^{n}-\overline{B})=\underset{z\in\partial B}{\min}\gamma_{k}(\cdot-z).\label{eq: barrier}
\end{equation}
The idea is to use $\rho_{k,B^{c}}$ as a barrier to obtain the desired
bound for $-I\phi$.

Let $z\in\partial B$, and let $y$ be a point on $\partial U\cap[z,x_{0}[$.
Then we have 
\[
\gamma_{k}(x_{0}-y_{0})\le\gamma_{k}(x_{0}-y)=\gamma_{k}(x_{0}-z)-\gamma_{k}(y-z)\le\gamma_{k}(x_{0}-z).
\]
Hence $y_{0}$ is also a $\rho_{k,B^{c}}$-closest point to $x_{0}$
on $\partial B$.%
{} Consequently we have 
\[
\rho_{k,B^{c}}(x_{0})=\gamma_{k}(x_{0}-y_{0})=\rho_{k}(x_{0}).
\]
Now consider $x\in U\subset\R^{n}-\overline{B}$, and let $z\in\partial B$
be a $\rho_{k,B^{c}}$-closest point to $x$. Let $y$ be a point
on $\partial U\cap[z,x[$. Then similarly to the above we can show
that 
\[
\rho_{k}(x)\le\gamma_{k}(x-y)\le\gamma_{k}(x-z)=\rho_{k,B^{c}}(x).
\]
As for $\rho_{k}$, we extend $\rho_{k,B^{c}}$ to all of $\R^{n}$
by setting it equal to $0$ on $B$. Then for $x\notin U$ we have
$\rho_{k}(x)=0\le\rho_{k,B^{c}}(x)$. Hence, $\phi$ is also touching
$\rho_{k,B^{c}}$ from below at $x_{0}$.

As we will show below in Part II, $I\rho_{k,B^{c}}(x)\le C_{V}$ for
$x\in V$. So by ellipticity of $I$ we must have 
\[
I\phi(x_{0})\le I\rho_{k,B^{c}}(x_{0})+M^{+}(\phi-\rho_{k,B^{c}})(x_{0})\le C_{V},
\]
since it is easy to see that $M^{+}(\phi-\rho_{k,B^{c}})(x_{0})\le0$
as $L(\phi-\rho_{k,B^{c}})(x_{0})\le0$ for any linear operator $L$.
The upper bound for $-Iu_{k}$ can be shown to hold similarly.

\medskip{}

PART II:\medskip{}

First let us show that $\rho_{k,B^{c}}$ is $C^{2}$ on $\R^{n}-\overline{B}$.
Let $x\in\R^{n}-\overline{B}$. By the results of Section \ref{subsec: Reg Opstacles}
we need to show that $x$ has only one $\rho_{k,B^{c}}$-closest point
on $\partial B$, and $\det Q_{k,B^{c}}(x)\ne0$ where $Q_{k,B^{c}}$
is given by (\ref{eq: Q_k,B}). (Note that although $\R^{n}-\overline{B}$
is not bounded, the function $\rho_{k,B^{c}}$ can be studied as in
Section \ref{subsec: Reg Opstacles}, since $\partial B$ is compact.)
Let $y\in\partial B$ be a $\rho_{k,B^{c}}$-closest point to $x$.
Then the convex set $x-\rho_{k,B^{c}}(x)K_{k}$ intersects $\partial B$
at $y$, and has empty intersection with $B$. But $\overline{B}$
is strictly convex; so $x-\rho_{k,B^{c}}(x)K_{k}$ cannot intersect
$\partial B$ at any other point (see Figure \ref{fig: 1}). Thus
$y$ is the unique $\rho_{k,B^{c}}$-closest point on $\partial B$
to $x$. 

Next note that since the exterior data $\varphi=0$, the equations
(\ref{eq: lambda})\textendash (\ref{eq: X}) reduce to 
\begin{equation}
\lambda_{k}(y)=\frac{1}{\gamma_{k}^{\circ}(\nu)},\qquad\mu_{k}(y)=\frac{\nu}{\gamma_{k}^{\circ}(\nu)},\qquad X_{k}=\frac{1}{\gamma_{k}^{\circ}(\nu)}D\gamma_{k}^{\circ}(\nu)\otimes\nu,\label{eq: X_k}
\end{equation}
where $\nu$ is the normal to $\partial B$ at $y$. Here we also
used (\ref{eq: homog}),(\ref{eq: Euler formula}) to simplify the
last expression. Hence by (\ref{eq: D2 rho (y)}) we have 
\begin{equation}
D^{2}\rho_{k,B^{c}}(y)=\frac{1}{\gamma_{k}^{\circ}(\nu)}(I-X_{k}^{T})D^{2}d_{B^{c}}(y)(I-X_{k}),\label{eq: D2 rho _B (y)}
\end{equation}
where $d_{B^{c}}$ is the Euclidean distance to $\partial B$ on $\R^{n}-B$.
But the eigenvalues of $D^{2}d$ are minus the principal curvatures
of the boundary, and a zero eigenvalue corresponding to the normal
direction (see {[}\citealp{MR1814364}, Section 14.6{]}). So the nonzero
eigenvalues of $D^{2}d_{B^{c}}$ are $\frac{1}{r_{0}}$, where $r_{0}$
is the radius of $B$ (note that we are in the exterior of $B$, so
the principal curvatures are $\frac{-1}{r_{0}}$). Therefore $D^{2}\rho_{k,B^{c}}(y)$
is a positive semidefinite matrix. Now let us consider the matrices
$W,Q$ for $\rho_{k,B^{c}}$ given by (\ref{eq: W,Q}). The eigenvalues
of 
\[
W_{k,B^{c}}(y)=-D^{2}\gamma_{k}^{\circ}(\mu_{k}(y))D^{2}\rho_{k,B^{c}}(y)
\]
must be nonpositive, because $D^{2}\gamma_{k}^{\circ}(\mu_{k})$ is
a positive semidefinite matrix. Therefore the eigenvalues of 
\begin{equation}
Q_{k,B^{c}}(x)=I-\rho_{k,B^{c}}(x)W_{k,B^{c}}(y)=I-\gamma_{k}(x-y)W_{k,B^{c}}(y)\label{eq: Q_k,B}
\end{equation}
are greater than or equal to 1. Hence $\det Q_{k,B^{c}}(x)>0$, and
we can conclude that $\rho_{k,B^{c}}$ is $C^{2}$ on a neighborhood
of an arbitrary point $x\in\R^{n}-\overline{B}$, as desired. 

In addition, by Lemma \ref{lem: D2 rho decreas} we have 
\[
D^{2}\rho_{k,B^{c}}(x)\le D^{2}\rho_{k,B^{c}}(y)=\frac{1}{\gamma_{k}^{\circ}(\nu)}(I-X_{k}^{T})D^{2}d_{B^{c}}(y)(I-X_{k}).
\]
Let us show that $D^{2}\rho_{k,B^{c}}(y)$ is bounded independently
of $k,y,B$. Since the radius of $B$ is fixed, $D^{2}d_{B^{c}}(y)$
is bounded independently of $y,B$. So we only need to show that $X_{k},1/\gamma_{k}^{\circ}(\nu)$
are uniformly bounded. And in order to do this, by (\ref{eq: X_k}),
it suffices to show that $D\gamma_{k}^{\circ}(\nu),1/\gamma_{k}^{\circ}(\nu)$
are uniformly bounded. Note that we have $\gamma_{k}(D\gamma_{k}^{\circ}(\nu))=1$
due to (\ref{eq: g0 (Dg)=00003D1}). Thus $\gamma(D\gamma_{k}^{\circ}(\nu))\le1$
for every $k$, since $\gamma\le\gamma_{k}$ due to $K\supset K_{k}$.
So $D\gamma_{k}^{\circ}(\nu)$ is bounded independently of $k$. On
the other hand, by (\ref{eq: c<g<C}) we have $\gamma_{k}^{\circ}(\nu)\ge\gamma_{1}^{\circ}(\nu)\ge c_{1}|\nu|=c_{1}$
for some $c_{1}>0$. Hence $1/\gamma_{k}^{\circ}(\nu)$ is uniformly
bounded too. Therefore $D^{2}\rho_{k,B^{c}}(x)$ has a uniform upper
bound, independently of $k,x,B$. 

Then it follows that 
\begin{align*}
\delta\rho_{k,B^{c}}(x,h)=\rho_{k,B^{c}}(x+h) & +\rho_{k,B^{c}}(x-h)-2\rho_{k,B^{c}}(x)\\
 & =|h|^{2}\int_{0}^{1}\int_{-1}^{1}tD_{\hat{h}\hat{h}}^{2}\rho_{k,B^{c}}(x+sth)\,dsdt\le C|h|^{2}\tag{\ensuremath{\hat{h}=h/|h|}}
\end{align*}
for some constant $C$ independent of $k,x,B$, provided that the
segment $[x-h,x+h]$ does not intersect $B$. Now we truncate $\rho_{k,B^{c}}$
outside of a neighborhood of $\overline{U}$ to make it bounded. Note
that we can choose this bound uniformly, since 
\[
\rho_{k,B^{c}}(x)=\gamma_{k}(x-y)\le\gamma_{1}(x-y)\le\gamma_{1}(l),
\]
where $l$ is the diameter of the chosen neighborhood of $\overline{U}$.
Also note that we can still make sure that $\rho_{k}\le\rho_{k,B^{c}}$
after truncation, since $\rho_{k}=0$ outside $U$.

Now suppose $x\in V$ and $d(V,\partial U)>\tau$. We can make $C$
larger if necessary so that for $|h|>\tau$ we have $\delta\rho_{k,B^{c}}(x,h)\le C\tau^{2}$,
since $\rho_{k,B^{c}}$ is bounded independently of $k,B$. (It suffices
to take $C\ge\frac{4}{\tau^{2}}\sup_{k,B}\|\rho_{k,B^{c}}\|_{L^{\infty}}$.)
Then by ellipticity of $I$ we have 
\begin{align*}
I\rho_{k,B^{c}}(x) & \le I0(x)+M^{+}\rho_{k,B^{c}}(x)\\
 & =0+(1-s)\int_{\mathbb{R}^{n}}\frac{\Lambda\delta\rho_{k,B^{c}}(x,h)^{+}-\lambda\delta\rho_{k,B^{c}}(x,h)^{-}}{|h|^{n+2s}}\,dh\\
 & \le(1-s)\int_{\mathbb{R}^{n}}\frac{(\Lambda+\lambda)C\min\{\tau^{2},|h|^{2}\}}{|h|^{n+2s}}\,dh\\
 & =(1-s)(\Lambda+\lambda)C\int_{\mathbb{S}^{n-1}}\int_{0}^{\infty}\frac{\min\{\tau^{2},r^{2}\}}{r^{n+2s}}\,r^{n-1}drdS\\
 & =(1-s)\hat{C}\int_{0}^{\infty}\min\{\tau^{2},r^{2}\}\,r^{-1-2s}dr=\frac{\hat{C}}{2s}\tau^{2-2s}\le\frac{\hat{C}\tau^{2-2s}}{2s_{0}}=:C_{V}<\infty,
\end{align*}
as desired.

\medskip{}

PART III:\medskip{}

Let $x_{0}\in V$ then $B_{\tau}(x_{0})\subset U$. Thus by the estimate
(\ref{eq: I(u_e) bdd}) and Theorem 4.1 of \citep{kriventsov2013c}
we have 
\[
\|u_{k}\|_{C^{1,\alpha}(B_{\tau/2}(x_{0}))}\le\frac{C}{\tau^{1+\alpha}}(\|u_{k}\|_{L^{\infty}(\mathbb{R}^{n})}+C_{V}\tau^{2s}),
\]
where $C,\alpha$ depend only on $n,s_{0},\lambda,\Lambda$. For simplicity
we are assuming that $\tau\le1$. %
(Note that by considering the scaled operator $I_{\tau}v(\cdot)=\tau^{2s}Iv(\frac{\cdot}{\tau})$,
which has the same ellipticity constants $\lambda,\Lambda$ as $I$,
and using the translation invariance of $I$, we have obtained the
estimate on the domain $B_{\tau/2}(x_{0})$ instead of $B_{1/2}(0)$.)
Then we can cover $V\subset\subset U$ with finitely many open balls
contained in $U$ and obtain 
\begin{equation}
\|u_{k}\|_{C^{1,\alpha}(\overline{V})}\le C(\|u_{k}\|_{L^{\infty}(\mathbb{R}^{n})}+C_{V}),\label{eq: u_k bdd C1,a}
\end{equation}
where $C$ depends only on $n,\lambda,\Lambda,s_{0}$, and $d(V,\partial U)$.
In particular $C$ does not depend on $k$. 

Therefore $u_{k}$ is bounded in $C^{1,\alpha}(\overline{V})$ independently
of $k$, because $\|u_{k}\|_{L^{\infty}}$ is uniformly bounded by
(\ref{eq: u_k bdd}). Hence there is a subsequence of $u_{k}$ that
is convergent in $C^{1}$ norm to a function in $C^{1,\alpha}(\overline{V})$.
But the limit must be $u$, since $u_{k}$ converges uniformly to
$u$ on $U$. Thus $u\in C_{\textrm{loc}}^{1,\alpha}(U)$. Also note
that $u_{k}$ converges uniformly to $u$ on $\R^{n}$, because $u_{k},u$
are zero outside of $U$. In addition, if we let $k\to\infty$ in
(\ref{eq: u_k bdd C1,a}), we obtain the same estimate for $\|u\|_{C^{1,\alpha}(\overline{V})}$.

Finally, let us show that due to the stability of viscosity solutions,
$u$ must satisfy the double obstacle problem (\ref{eq: dbl obstcl})
for zero exterior data. Suppose $\phi$ is a bounded $C^{2}$ function
and $u-\phi$ has a maximum over $\R^{n}$ at $x_{0}\in U$. Let us
first consider the case where $u-\phi$ has a strict maximum at $x_{0}$.
We must show that at $x_{0}$ we have 
\begin{equation}
\max\{\min\{-I\phi(x_{0}),\;u+\bar{\rho}\},u-\rho\}\le0.\label{eq: 3 in Reg u}
\end{equation}
Now we know that $u_{k}-\phi$ takes its global maximum at a point
$x_{k}$ where $x_{k}\to x_{0}$; because $u_{k}$ uniformly converges
to $u$ on $\R^{n}$.

We also know that $-\bar{\rho}\leq u\leq\rho$. If $-\bar{\rho}(x_{0})=u(x_{0})$
then (\ref{eq: 3 in Reg u}) holds trivially. So suppose $-\bar{\rho}(x_{0})<u(x_{0})$.
Then for large $k$ we have $-\bar{\rho}_{k}(x_{k})<u_{k}(x_{k})$,
since $u_{k}+\bar{\rho}_{k}$ locally uniformly converges to $u+\bar{\rho}$.
Hence since $u_{k}$ is a viscosity solution of the equation (\ref{eq: dbl obstcl k}),
at $x_{k}$ we have 
\[
\max\{\min\{-I\phi(x_{k}),\;u_{k}+\bar{\rho}_{k}\},u_{k}-\rho_{k}\}\le0.
\]
But $u_{k}+\bar{\rho}_{k}>0$ at $x_{k}$, so we must have $-I\phi(x_{k})\le0$.
Thus by letting $k\to\infty$ and using the continuity of $I\phi$
we see that (\ref{eq: 3 in Reg u}) holds in this case too. 

Now if the maximum of $u-\phi$ at $x_{0}$ is not strict, we can
approximate $\phi$ with $\phi_{\epsilon}=\phi+\epsilon\tilde{\phi}$,
where $\tilde{\phi}$ is a bounded $C^{2}$ functions which vanishes
at $x_{0}$ and is positive elsewhere. Then, as we have shown, when
$-\bar{\rho}(x_{0})<u(x_{0})$ we have $-I\phi_{\epsilon}(x_{0})\le0$.
Hence by the ellipticity of $I$ we get 
\[
-I\phi(x_{0})\le M^{+}(\epsilon\tilde{\phi})(x_{0})-I\phi_{\epsilon}(x_{0})\le\epsilon M^{+}\tilde{\phi}(x_{0})\underset{\epsilon\to0}{\longrightarrow}0,
\]
as desired. Similarly, we can show that when $u-\phi$ has a minimum
at $x_{0}\in U$ we have 
\[
\max\{\min\{-I\phi(x_{0}),\;u+\bar{\rho}\},u-\rho\}\ge0.
\]
Therefore $u$ is a viscosity solution of equation (\ref{eq: dbl obstcl})
as desired. At the end note that by Lemma \ref{lem: g(Du)<1} we also
have $u\in C^{0,1}(\overline{U})$, since $u$ and its derivative
are bounded, and $\partial U$ is smooth enough.
\end{proof}

\appendix

\section{\label{sec: App A}Proof of Theorem \ref{thm: Reg dbl obstcl} for
nonzero exterior data}

We do not use the next two propositions directly in the proof of Theorem
\ref{thm: Reg dbl obstcl}, however, they enhance our understanding
of double obstacle problems and equations with gradient constraints.
First let us introduce the following terminology for the solutions
of the double obstacle problem (\ref{eq: dbl obstcl}). (The notation
is motivated by the physical properties of the elastic-plastic torsion
problem, in which $E$ stands for the \textit{elastic} region, and
$P$ stands for the \textit{plastic} region.)
\begin{defn}
\label{def: plastic}Let 
\begin{eqnarray*}
P^{+}:=\{x\in U:u(x)=\rho(x)\}, &  & P^{-}:=\{x\in U:u(x)=-\bar{\rho}(x)\}.
\end{eqnarray*}
Then $P:=P^{+}\cup P^{-}$ is called the \textbf{coincidence} set;
and 
\[
E:=\{x\in U:-\bar{\rho}(x)<u(x)<\rho(x)\}
\]
is called the \textbf{non-coincidence} set. We also define the \textbf{free
boundary} to be $\partial E\cap U$.
\end{defn}

\begin{prop}
\label{prop: segment is plastic}Suppose Assumption \ref{assu: all}
holds. Let $u\in C^{1}(U)$ be a viscosity solution of the double
obstacle problem (\ref{eq: dbl obstcl}). If $x\in P^{+}$, and $y$
is a $\rho$-closest point on $\partial U$ to $x$ such that $[x,y[\subset U$,
then we have $[x,y[\subset P^{+}$. Similarly, if $x\in P^{-}$, and
$y$ is a $\bar{\rho}$-closest point on $\partial U$ to $x$ such
that $[x,y[\subset U$, then we have $[x,y[\subset P^{-}$.%
\end{prop}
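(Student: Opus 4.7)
The plan is to combine two facts. First, Lemma~\ref{lem: segment to the closest pt}(a) tells us that along the segment from $x$ to a $\rho$-closest boundary point $y$, the obstacle $\rho$ is affine, decreasing at the maximal rate allowed by $\gamma$. Second, the proof of Lemma~\ref{lem: g(Du)<1} actually establishes a stronger pointwise statement than $\gamma^{\circ}(Du)\le 1$, namely the Lipschitz-type inequality
\[
u(a+h)-u(a)\le\gamma(h)\qquad\textrm{for all }a,h\in\R^{n};
\]
this is the estimate I would like to invoke. Combined with the a priori bound $u\le\rho$ on $U$ built into the double obstacle problem~(\ref{eq: dbl obstcl}), these two ingredients force equality along the whole segment.

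For the $P^{+}$ statement, I would parametrize $z_{t}:=x+t(y-x)$, $t\in[0,1[$. The hypothesis $[x,y[\subset U$ ensures $z_{t}\in U$, so $u(z_{t})\le\rho(z_{t})$. By Lemma~\ref{lem: segment to the closest pt}(a), $y$ remains a $\rho$-closest point on $\partial U$ to $z_{t}$, whence
\[
\rho(z_{t})=\gamma(z_{t}-y)+\varphi(y)=(1-t)\gamma(x-y)+\varphi(y)=\rho(x)-t\gamma(x-y).
\]
Applying the Lipschitz inequality with $a=z_{t}$ and $h=x-z_{t}=t(x-y)$ gives $u(x)-u(z_{t})\le t\gamma(x-y)$, hence, using $u(x)=\rho(x)$,
\[
u(z_{t})\ge u(x)-t\gamma(x-y)=\rho(x)-t\gamma(x-y)=\rho(z_{t}).
\]
Together with $u(z_{t})\le\rho(z_{t})$ this yields $z_{t}\in P^{+}$.

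For the $P^{-}$ statement the argument will be symmetric via the identity $\bar{\rho}_{K,\varphi}=\rho_{-K,-\varphi}$, which exchanges the roles of $\gamma$ and $\bar{\gamma}(\cdot)=\gamma(-\cdot)$. Concretely, applying the same Lipschitz inequality in the opposite direction yields $u(z_{t})-u(x)\le\gamma(z_{t}-x)=t\bar{\gamma}(x-y)$, while a direct computation on the segment gives $-\bar{\rho}(z_{t})=-\bar{\rho}(x)+t\bar{\gamma}(x-y)=u(x)+t\bar{\gamma}(x-y)$; combined with $u(z_{t})\ge-\bar{\rho}(z_{t})$ this forces $u(z_{t})=-\bar{\rho}(z_{t})$, so $z_{t}\in P^{-}$.

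I do not expect any real obstacle here; the role of the hypothesis $[x,y[\subset U$ is exactly to guarantee that each $z_{t}$ lies in $U$ so that the obstacle inequalities $u\le\rho$ and $u\ge-\bar{\rho}$ apply there. Notably, the argument uses neither the $C^{1}$ regularity of $u$ in the interior nor any regularity of $\partial K$, and it illustrates that, for this proposition, the pointwise Lipschitz inequality extracted in the proof of Lemma~\ref{lem: g(Du)<1} is a more powerful statement than its differentiated form $\gamma^{\circ}(Du)\le1$.
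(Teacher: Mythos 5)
Your proof is correct and is essentially the same argument as the paper's, just presented in ``integrated'' rather than ``differentiated'' form. The paper also reduces to Lemma~\ref{lem: segment to the closest pt}(a) (linearity of the obstacle along the segment) combined with the gradient bound from Lemma~\ref{lem: g(Du)<1}; it introduces $\tilde{v}:=u-(-\bar{\rho})\ge 0$, shows $D_{\xi}\tilde{v}\le 0$ along the segment from $\langle Du,\xi\rangle\le\gamma^{\circ}(Du)\gamma(\xi)\le 1$ and $D_{\xi}(-\bar{\rho})=1$, and concludes $\tilde{v}\equiv 0$ since $\tilde{v}(x)=\tilde{v}(y)=0$. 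You instead invoke the pointwise Lipschitz inequality $u(a+h)-u(a)\le\gamma(h)$ proved along the way in Lemma~\ref{lem: g(Du)<1}, which lets you compare $u(z_t)$ with $u(x)$ directly and avoid the derivative-and-integrate step. This is a mild streamlining rather than a different route: the two inequalities you use are the integrated versions of the two derivative facts the paper uses, so the core ideas and the lemmas invoked coincide. Your closing observation that the Lipschitz form makes the $C^{1}$ regularity of $u$ inessential to this particular step is accurate and worth noting, though Lemma~\ref{lem: g(Du)<1} (and hence the Lipschitz bound) is stated under that hypothesis anyway.
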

\begin{rem*}
If the strict Lipschitz property (\ref{eq: phi strct Lip}) for $\varphi$
holds, then by Lemma \ref{lem: segment to the closest pt} we automatically
have $[x,y[\subset U$. 
\end{rem*}
\begin{proof}
Suppose $x\in P^{-}$; the other case is similar. We have 
\[
u(x)=-\bar{\rho}(x)=-\gamma(y-x)+\varphi(y).
\]
Let $\tilde{v}:=u-(-\bar{\rho})\ge0$, and $\xi:=\frac{y-x}{\gamma(y-x)}=-\frac{x-y}{\bar{\gamma}(x-y)}$.
Then $\bar{\rho}$ varies linearly along the segment $]x,y[$, since
$y$ is a $\bar{\rho}$-closest point to the points of the segment.
So we have $D_{\xi}(-\bar{\rho})=D_{-\xi}\bar{\rho}=1$ along the
segment. Note that we do not assume the differentiability of $\bar{\rho}$;
and $D_{-\xi}\bar{\rho}$ is just the derivative of the restriction
of $\bar{\rho}$ to the segment $]x,y[$. Now by Lemma \ref{lem: g(Du)<1}
we get 
\[
D_{\xi}u=\langle Du,\xi\rangle\le\gamma^{\circ}(Du)\gamma(\xi)\le1.
\]
So we have $D_{\xi}\tilde{v}\le0$ along $]x,y[$. Thus as $\tilde{v}(x)=\tilde{v}(y)=0$,
and $\tilde{v}$ is continuous on the closed segment $[x,y]$, we
must have $\tilde{v}\equiv0$ on $[x,y]$. Therefore $u=-\bar{\rho}$
along the segment as desired.
\end{proof}
\begin{prop}
\label{prop: ridge is elastic}Suppose Assumption \ref{assu: all}
holds. Let $u\in C^{1}(U)$ be a viscosity solution of the double
obstacle problem (\ref{eq: dbl obstcl}). Suppose Assumption \ref{assu: K,U}
holds too. Then we have 
\begin{eqnarray*}
R_{\rho,0}\cap P^{+}=\emptyset, & \hspace{2cm} & R_{\bar{\rho},0}\cap P^{-}=\emptyset.
\end{eqnarray*}
\end{prop}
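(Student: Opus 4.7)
The plan is to argue by contradiction: assume there is a point $x\in R_{\rho,0}\cap P^{+}$ and show that $u\in C^{1}(U)$, together with the gradient bound $\gamma^{\circ}(Du)\le 1$ and strict convexity of $K$, is incompatible with $x$ having several $\rho$-closest points. The second claim will follow by the same argument applied to $-K$ and $-\varphi$.

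First I would record what $R_{\rho,0}$ means here. Under Assumption \ref{assu: K,U}, $\gamma$ is strictly convex and $\gamma^{\circ}(D\varphi)<1$ gives the strict Lipschitz property \eqref{eq: phi strct Lip} for $\varphi$. Hence, per Definition \ref{def: ridge}, membership of $x$ in $R_{\rho,0}$ is equivalent to the existence of two distinct $\rho$-closest points $y_{1},y_{2}\in\partial U$ to $x$. By Lemma \ref{lem: segment to the closest pt}(b), both half-open segments $[x,y_{i}[$ lie in $U$, so Proposition \ref{prop: segment is plastic} yields $u=\rho$ on $[x,y_{i}[$. Since $y_{i}$ is a $\rho$-closest point along each segment, $\rho$ is linear there:
\[
u(x+t(y_{i}-x))=\rho(x+t(y_{i}-x))=(1-t)\gamma(x-y_{i})+\varphi(y_{i}),\qquad t\in[0,1[.
\]
Differentiating at $t=0^{+}$, which is legitimate because $u\in C^{1}(U)$, gives the two identities
\[
\langle Du(x),\,x-y_{i}\rangle=\gamma(x-y_{i}),\qquad i=1,2.
\]

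Next I would combine these with Lemma \ref{lem: g(Du)<1} and \eqref{eq: gen Cauchy-Schwartz 2}. The identities above say
\[
\frac{\langle x-y_{i},Du(x)\rangle}{\gamma(x-y_{i})}=1,
\]
so \eqref{eq: gen Cauchy-Schwartz 2} forces $\gamma^{\circ}(Du(x))\ge 1$, while Lemma \ref{lem: g(Du)<1} gives $\gamma^{\circ}(Du(x))\le 1$. Thus both vectors $z_{i}:=(x-y_{i})/\gamma(x-y_{i})\in\partial K$ realize the maximum $\max_{z\in\partial K}\langle z,Du(x)\rangle=\gamma^{\circ}(Du(x))=1$. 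Since $Du(x)\ne 0$ and $K$ is strictly convex (this is where Assumption \ref{assu: K,U}(a) enters decisively, via the positive curvature of $\partial K$), a linear functional attains its maximum on $K$ at a unique boundary point, so $z_{1}=z_{2}$.

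The main obstacle is really only this uniqueness step, which I would dispatch with the standard fact about supporting hyperplanes to strictly convex bodies. Once $z_{1}=z_{2}$, the vectors $x-y_{1}$ and $x-y_{2}$ are positive multiples of a common direction, so $y_{1}$ and $y_{2}$ lie on the same ray emanating from $x$. Without loss of generality $y_{2}\in\,]x,y_{1}[$; but Lemma \ref{lem: segment to the closest pt}(b) already placed $\,]x,y_{1}[\subset U$, contradicting $y_{2}\in\partial U$. This contradiction proves $R_{\rho,0}\cap P^{+}=\emptyset$, and replacing $K,\varphi,\rho$ by $-K,-\varphi,\bar{\rho}$ (and $P^{+}$ by $P^{-}$) yields $R_{\bar{\rho},0}\cap P^{-}=\emptyset$ in exactly the same way.
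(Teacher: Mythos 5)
Your proof is correct and follows essentially the same path as the paper's: reduce to two distinct $\rho$-closest points via Lemma \ref{lem: segment to the closest pt} and Proposition \ref{prop: segment is plastic}, differentiate the linear restriction of $u$ along each segment, and use the gradient bound from Lemma \ref{lem: g(Du)<1} together with strict convexity of $K$ to force the two directions to coincide, contradicting $]x,y_i[\subset U$. The only cosmetic difference is that the paper writes out the $P^{-}$ case and calls the $P^{+}$ case symmetric, while you do the opposite, and you spell out the supporting-hyperplane uniqueness step that the paper leaves implicit in the phrase ``since $\gamma$ is strictly convex.''
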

\begin{proof}
Note that due to Assumption \ref{assu: K,U}, the strict Lipschitz
property (\ref{eq: phi strct Lip}) for $\varphi$ holds, and $\gamma$
is strictly convex. Let us show that $R_{\bar{\rho},0}\cap P^{-}=\emptyset$;
the other case is similar. Suppose to the contrary that $x\in R_{\bar{\rho},0}\cap P^{-}$.
Then there are at least two distinct points $y,z\in\partial U$ such
that 
\[
\bar{\rho}(x)=\gamma(y-x)-\varphi(y)=\gamma(z-x)-\varphi(z).
\]
Now by Lemma \ref{lem: segment to the closest pt} we know that $[x,y[,[x,z[\subset U$;
so by Proposition \ref{prop: segment is plastic} we get $[x,y[,[x,z[\subset P^{-}$.%
{} In other words, $u=-\bar{\rho}$ on both of these segments. Therefore
by Lemma \ref{lem: segment to the closest pt}, $u$ varies linearly
on both of these segments. Hence we get 
\[
\big\langle Du(x),\frac{y-x}{\gamma(y-x)}\big\rangle=1=\big\langle Du(x),\frac{z-x}{\gamma(z-x)}\big\rangle.
\]
However, since $\gamma$ is strictly convex, and by Lemma \ref{lem: g(Du)<1}
we know that $\gamma^{\circ}(Du(x))\le1$, this equality implies that
$z,y$ must be on the same ray emanating from $x$. But this contradicts
the fact that $[x,y[,[x,z[\subset U$.
\end{proof}
\begin{rem*}
Since here we do not have $C^{1,1}$ regularity for $u$, we cannot
imitate the proof given in the local case to show that $P^{+},P^{-}$
do not intersect $R_{\rho},R_{\bar{\rho}}$. In addition, merely knowing
that the solution does not touch the obstacles at their singularities
is not enough to obtain uniform bounds for $Iu_{k}$ in the proof
of Theorem \ref{thm: Reg dbl obstcl}, because we need to have a uniform
positive distance from the ridges too, due to the nonlocal nature
of the operator. As we have seen in the proof of Theorem \ref{thm: Reg dbl obstcl},
we overcome these limitations by using some suitable barriers. 
\end{rem*}

Next let us review some well-known facts from convex analysis which
are needed in the following proof. Consider a compact convex set $K$.
Let $x\in\partial K$ and $\mathrm{v}\in\R^{n}-\{0\}$. We say the
hyperplane 
\begin{equation}
\Gamma_{x,\mathrm{v}}:=\{y\in\R^{n}:\langle y-x,\mathrm{v}\rangle=0\}\label{eq: hyperplane}
\end{equation}
is a \textit{supporting hyperplane} of $K$ at $x$ if $K\subset\{y:\langle y-x,\mathrm{v}\rangle\le0\}$.
In this case we say $\mathrm{v}$ is an \textit{outer normal vector}
of $K$ at $x$ (see Figure \ref{fig: 3}). The \textit{normal cone}
of $K$ at $x$ is the closed convex cone 
\begin{equation}
N(K,x):=\{0\}\cup\{\mathrm{v}\in\mathbb{R}^{n}-\{0\}:\mathrm{v}\textrm{ is an outer normal vector of }K\textrm{ at }x\}.\label{eq: normal cone}
\end{equation}
It is easy to see that when $\partial K$ is $C^{1}$ we have 
\[
N(K,x)=\{tD\gamma(x):t\ge0\}.
\]
For more details see {[}\citealp{MR3155183}, Sections 1.3 and 2.2{]}.
\begin{proof}[\textbf{Proof of Theorem \ref{thm: Reg dbl obstcl} for nonzero exterior
data}]
 As before we approximate $K^{\circ}$ by a sequence $K_{k}^{\circ}$
of compact convex sets, that have smooth boundaries with positive
curvature, and 
\begin{eqnarray*}
K_{k+1}^{\circ}\subset\mathrm{int}(K_{k}^{\circ}), & \qquad & K^{\circ}={\textstyle \bigcap}K_{k}^{\circ}.
\end{eqnarray*}
Then $K_{k}$'s are strictly convex compact sets with $0$ in their
interior, which have smooth boundaries with positive curvature. Furthermore
we have $K=(K^{\circ})^{\circ}\supset K_{k+1}\supset K_{k}$. To simplify
the notation we use $\gamma_{k},\gamma_{k}^{\circ},\rho_{k},\bar{\rho}_{k}$
instead of $\gamma_{K_{k}},\gamma_{K_{k}^{\circ}},\rho_{K_{k},\varphi},\bar{\rho}_{K_{k},\varphi}$,
respectively. Note that $K_{k},U,\varphi$ satisfy the Assumption
\ref{assu: K,U}. In particular we have $\gamma_{k}^{\circ}(D\varphi)<1$,
since $D\varphi\in K^{\circ}\subset\mathrm{int}(K_{k}^{\circ})$.
Hence as we have shown in \citep{SAFDARI202176}, $\rho_{k},\bar{\rho}_{k}$
satisfy the assumptions of Theorem 1 of \citep{Safd-Nonlocal-dbl-obst}.
Thus there are viscosity solutions $u_{k}\in C_{\mathrm{loc}}^{1,\alpha}(U)$
of the double obstacle problem 
\begin{equation}
\begin{cases}
\max\{\min\{-Iu_{k},\;u_{k}+\bar{\rho}_{k}\},u_{k}-\rho_{k}\}=0 & \textrm{in }U,\\
u_{k}=\varphi & \textrm{in }\mathbb{R}^{n}-U.
\end{cases}\label{eq: dbl obstcl k-1}
\end{equation}
And $\alpha>0$ depends only on $n,\lambda,\Lambda,s_{0}$. 

As before, we can easily see that $u_{k}$'s and their derivatives
are uniformly bounded. Hence a subsequence of them converges uniformly
to a continuous function $u$, which we extend to all of $\R^{n}$
by setting it equal to $\varphi$ in $\R^{n}-U$. In Part I we consider
the barrier 
\begin{equation}
\rho_{k,B^{c}}:=\rho_{K_{k},\varphi}(\cdot\,;\R^{n}-\overline{B})=\underset{z\in\partial B}{\min}[\gamma_{k}(\cdot-z)+\varphi(z)],\label{eq: barrier 2}
\end{equation}
where $B\subset\R^{n}-\overline{U}$ is an open ball such that $\partial B\cap\partial U=\{y_{0}\}$,
in which $y_{0}\in\partial U$ is a $\rho_{k}$-closest point to $x_{0}$. 

Let $z\in\partial B$, and let $y$ be a point on $\partial U\cap[z,x_{0}[$.
Then by the Lipschitz property (\ref{eq: phi Lip}) for $\varphi$
with respect to $\gamma_{k}$ (note that $\gamma_{k}^{\circ}(D\varphi)<1$)
we have 
\begin{align*}
\gamma_{k}(x_{0}-y_{0}) & +\varphi(y_{0})\le\gamma_{k}(x_{0}-y)+\varphi(y)\\
 & =\gamma_{k}(x_{0}-z)+\varphi(z)-\gamma_{k}(y-z)+\varphi(y)-\varphi(z)\le\gamma_{k}(x_{0}-z)+\varphi(z).
\end{align*}
Hence $y_{0}$ is also a $\rho_{k,B^{c}}$-closest point to $x_{0}$
on $\partial B$.%
{} Consequently we have 
\[
\rho_{k,B^{c}}(x_{0})=\gamma_{k}(x_{0}-y_{0})+\varphi(y_{0})=\rho_{k}(x_{0}).
\]
Now consider $x\in U\subset\R^{n}-\overline{B}$, and let $z\in\partial B$
be a $\rho_{k,B^{c}}$-closest point to $x$. Let $y$ be a point
on $\partial U\cap[z,x[$. Then similarly to the above we can show
that 
\[
\rho_{k}(x)\le\gamma_{k}(x-y)+\varphi(y)\le\gamma_{k}(x-z)+\varphi(z)=\rho_{k,B^{c}}(x).
\]
As for $\rho_{k}$, we extend $\rho_{k,B^{c}}$ to all of $\R^{n}$
by setting it equal to $\varphi$ on $B$. Then for $x\notin U$ we
either have $\rho_{k}(x)=\varphi(x)=\rho_{k,B^{c}}(x)$ when $x\in B$,
or 
\[
\rho_{k}(x)=\varphi(x)\le\gamma_{k}(x-z)+\varphi(z)=\rho_{k,B^{c}}(x),
\]
when $x\notin B$ and $z\in\partial B$ is a $\rho_{k,B^{c}}$-closest
point to $x$. Hence, $\phi$ is also touching $\rho_{k,B^{c}}$ from
below at $x_{0}$. 

The rest of the proof in Part I, and Part III of the proof, go as
before. So we only need to prove the properties of the new barrier
$\rho_{k,B^{c}}$, similarly to the Part II of the proof in the case
of zero exterior data. First let us show that $\rho_{k,B^{c}}$ is
$C^{2}$ on $\R^{n}-\overline{B}$. Let $x\in\R^{n}-\overline{B}$.
By the results of Section \ref{subsec: Reg Opstacles} we need to
show that $x$ has only one $\rho_{k,B^{c}}$-closest point on $\partial B$,
and $\det Q_{k,B^{c}}(x)\ne0$ where $Q_{k,B^{c}}$ is given by (\ref{eq: Q_k,B-1}).

Suppose to the contrary that $y,\tilde{y}\in\partial B$ are two $\rho_{k,B^{c}}$-closest
points to $x$. Let $z=\frac{y+\tilde{y}}{2}\in B$. We assume that
the radius of $B$ is small enough so that $\varphi$ is convex on
a neighborhood of it. Then we have 
\[
\gamma_{k}(x-z)+\varphi(z)\le\frac{1}{2}\big(\gamma_{k}(x-y)+\gamma_{k}(x-\tilde{y})+\varphi(y)+\varphi(\tilde{y})\big)=\rho_{k,B^{c}}(x).
\]
Let $\tilde{z}$ be the point on $\partial B\,\cap\,]z,x[$. Then
by the strict Lipschitz property (\ref{eq: phi strct Lip}) for $\varphi$
with respect to $\gamma_{k}$ (note that $\gamma_{k}^{\circ}(D\varphi)<1$)
we have 
\begin{align*}
\gamma_{k}(x-\tilde{z})+\varphi(\tilde{z})=\gamma_{k}(x-z)+\varphi(z) & -\gamma_{k}(\tilde{z}-z)+\varphi(\tilde{z})-\varphi(z)\\
 & <\gamma_{k}(x-z)+\varphi(z)\le\rho_{k,B^{c}}(x),
\end{align*}
which is a contradiction. So $x$ must have a unique $\rho_{k,B^{c}}$-closest
point $y$ on $\partial B$.

Next note that by (\ref{eq: D2 rho (y)}) at $y\in\partial B$ we
have 
\begin{equation}
D^{2}\rho_{k,B^{c}}(y)=(I-X_{k}^{T})\big(D^{2}\varphi(y)+\lambda_{k}(y)D^{2}d_{B^{c}}(y)\big)(I-X_{k}),\label{eq: D2 rho _B (y)-2}
\end{equation}
where $d_{B^{c}}$ is the Euclidean distance to $\partial B$ on $\R^{n}-B$,
and $\lambda_{k},X_{k}$ are given by (\ref{eq: lambda}),(\ref{eq: X})
(using $\gamma_{k}^{\circ}$ instead of $\gamma^{\circ}$). But the
eigenvalues of $D^{2}d$ are minus the principal curvatures of the
boundary, and a zero eigenvalue corresponding to the normal direction
(see {[}\citealp{MR1814364}, Section 14.6{]}). So the nonzero eigenvalues
of $D^{2}d_{B^{c}}$ are $\frac{1}{r_{0}}$, where $r_{0}$ is the
radius of $B$ (note that we are in the exterior of $B$, so the curvatures
are $\frac{-1}{r_{0}}$). Hence $D^{2}d_{B^{c}}$ is a positive semidefinite
matrix. Therefore $D^{2}\rho_{k,B^{c}}(y)$ is also a positive semidefinite
matrix, since $\varphi$ is convex on a neighborhood of $\partial U$.
(Although the convexity of $\varphi$ is not really needed here. Because
by using $\gamma^{\circ}(D\varphi)<1$ we can easily show that $\lambda_{k}$
has a uniform positive lower bound independently of $k,B$. Then by
decreasing the radius $r_{0}$ and using the boundedness of $D^{2}\varphi$
we can get the desired.)

Now let us consider the matrices $W,Q$ for $\rho_{k,B^{c}}$ given
by (\ref{eq: W,Q}). The eigenvalues of 
\[
W_{k,B^{c}}(y_{0}):=-D^{2}\gamma_{k}^{\circ}(\mu_{k}(y))D^{2}\rho_{k,B^{c}}(y)
\]
must be nonpositive, because $D^{2}\gamma_{k}^{\circ}(\mu_{k})$ is
a positive semidefinite matrix. Therefore the eigenvalues of 
\begin{equation}
Q_{k,B^{c}}(x):=I-\big(\rho_{k,B^{c}}(x)-\varphi(y)\big)W_{k,B^{c}}(y)=I-\gamma_{k}(x-y)W_{k,B^{c}}(y)\label{eq: Q_k,B-1}
\end{equation}
are greater than or equal to 1. Hence $\det Q_{k,B^{c}}(x)>0$, and
we can conclude that $\rho_{k,B^{c}}$ is $C^{2}$ on a neighborhood
of an arbitrary point $x\in\R^{n}-\overline{B}$, as desired. 

In addition, by Lemma \ref{lem: D2 rho decreas} we have 
\[
D^{2}\rho_{k,B^{c}}(x)\le D^{2}\rho_{k,B^{c}}(y)=(I-X_{k}^{T})\big(D^{2}\varphi(y)+\lambda_{k}(y)D^{2}d_{B^{c}}(y)\big)(I-X_{k}).
\]
Let us show that $D^{2}\rho_{k,B^{c}}(y)$ is bounded independently
of $k,y,B$. Since the radius of $B$ is fixed, $D^{2}d_{B^{c}}(y)$
is bounded independently of $y,B$. So we only need to show that $X_{k},\lambda_{k}$
are uniformly bounded. Note that $\gamma_{k}^{\circ}\ge\gamma_{1}^{\circ}$,
since $K_{k}^{\circ}\subset K_{1}^{\circ}$. Thus by (\ref{eq: lambda})
we have 
\[
\gamma_{1}^{\circ}(D\varphi+\lambda_{k}\nu)\le\gamma_{k}^{\circ}(D\varphi+\lambda_{k}\nu)=1.
\]
Hence by (\ref{eq: c<g<C}) applied to $\gamma_{1}^{\circ}$, we have
$|D\varphi+\lambda_{k}\nu|\le C$ for some $C>0$. Therefore we get
$|\lambda_{k}|=|\lambda_{k}\nu|\le C+|D\varphi|$. Thus $\lambda_{k}$
is bounded independently of $k,y,B$.

Hence we only need to show that the entries of $X_{k}=\frac{1}{\langle D\gamma_{k}^{\circ}(\mu_{k}),\nu\rangle}D\gamma_{k}^{\circ}(\mu_{k})\otimes\nu$
are bounded. Note that by (\ref{eq: g0 (Dg)=00003D1}) we have $\gamma_{k}(D\gamma_{k}^{\circ}(\nu))=1$.
Thus $\gamma(D\gamma_{k}^{\circ}(\nu))\le1$ for every $k$, since
$\gamma\le\gamma_{k}$ due to $K\supset K_{k}$. So $D\gamma_{k}^{\circ}(\nu)$
is bounded independently of $k$. Therefore it only remains to show
that $\langle D\gamma_{k}^{\circ}(\mu_{k}),\nu\rangle$ has a positive
lower bound independently of $k,y,B$.%
{} Note that for every $k,B$, $\langle D\gamma_{k}^{\circ}(\mu_{k}),\nu\rangle$
is a continuous positive function on the compact set $\partial B$,
as explained in Section \ref{subsec: Reg Opstacles}. Hence there
is $c_{k,B}>0$ such that $\langle D\gamma_{k}^{\circ}(\mu_{k}),\nu\rangle\ge c_{k,B}$.
Suppose to the contrary that there is a sequence of balls $B_{j}$,
points $y_{j}\in\partial B_{j}$, and $k_{j}$ (which we simply denote
by $j$) such that 
\begin{equation}
\langle D\gamma_{j}^{\circ}(\mu_{j}(y_{j})),\nu_{j}\rangle\to0,\label{eq: 2 in reg Nonconv dom}
\end{equation}
where $\nu_{j}$ is the unit normal to $\partial B_{j}$ at $y_{j}$.
By passing to another subsequence, we can assume that $y_{j}\to y$,
since $y_{j}$'s belong to a compact neighborhood of $\partial U$.
Now remember that 
\[
\mu_{j}(y_{j})=D\varphi(y_{j})+\lambda_{j}(y_{j})\nu_{j},
\]
where $\lambda_{j}>0$. As we have shown in the last paragraph, $\lambda_{j}$
is bounded independently of $j,B_{j}$. Hence by passing to another
subsequence, we can assume that $\lambda_{j}\to\lambda^{*}\ge0$.
Also, $|\nu_{j}|=1$. Thus by passing to yet another subsequence we
can assume that $\nu_{j}\to\nu^{*}$, where $|\nu^{*}|=1$. Therefore
we have 
\[
\mu_{j}(y_{j})\to\mu^{*}:=D\varphi(y)+\lambda^{*}\nu^{*}.
\]
On the other hand we have $\gamma_{j}^{\circ}(\mu_{j}(y_{j}))=1$.
Hence $\gamma^{\circ}(\mu_{j}(y_{j}))\ge1$, since $\gamma_{j}^{\circ}\le\gamma^{\circ}$
due to $K^{\circ}\subset K_{j}^{\circ}$. Thus we get $\gamma^{\circ}(\mu^{*})\ge1$.
However we cannot have $\gamma^{\circ}(\mu^{*})>1$. Because then
$\mu^{*}$ will have a positive distance from $K^{\circ}$, and therefore
it will have a positive distance from $K_{j}^{\circ}$ for large enough
$j$. But this contradicts the facts that $\mu_{j}(y_{j})\to\mu^{*}$
and $\mu_{j}(y_{j})\in K_{j}^{\circ}$. Thus we must have $\gamma^{\circ}(\mu^{*})=1$,
i.e. $\mu^{*}\in\partial K^{\circ}$.

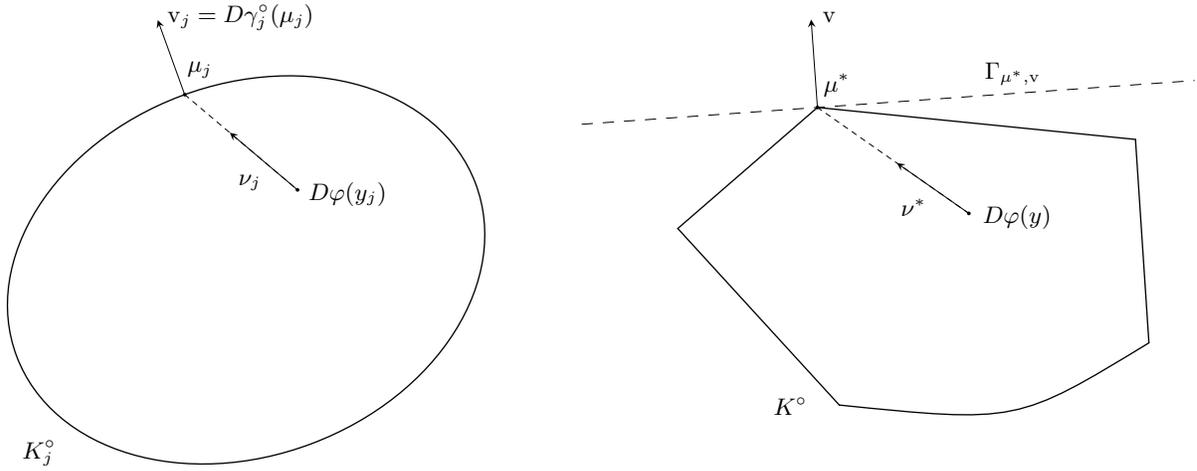
\begin{figure}

\begin{tikzpicture}[line cap=round,line join=round,>=triangle 45,x=1.0cm,y=1.0cm] 

\clip(-2,0) rectangle (16,8);

\draw [rotate around={20.650644659511226:(1.7498111476538434,3.6381515976564547)},line width=0.5pt] (1.7498111476538434,3.6381515976564547) ellipse (3.2597661359843753cm and 2.470521252956651cm);

\draw [line width=0.3pt,dash pattern=on 2pt off 2pt] (0.9335153724987351,5.970231839275003)-- (2.432888379696236,4.70331909999479);

\draw [->,line width=0.3pt, >=stealth] (2.432888379696236,4.70331909999479) -- (1.532548864499495,5.464071491830982);

\draw [->,line width=0.3pt, >=stealth] (0.9335153724987351,5.970231839275003) -- (0.5750085988536315,6.962224483050994);

\draw [line width=0.5pt] (9.345119644527374,5.7994334429615355)-- (7.488436165624511,4.188816449214472);

\draw [line width=0.5pt] (7.488436165624511,4.188816449214472)-- (9.635925490620593,1.84); 

\draw [line width=0.5pt] (9.635925490620593,1.84).. controls (12,1.6) .. (13.751946696863083,2.6676781773422444);

\draw [line width=0.5pt] (13.751946696863083,2.6676781773422444)-- (13.57298925311342,5.374409514056062); 

\draw [line width=0.5pt] (13.57298925311342,5.374409514056062)-- (9.345119644527374,5.7994334429615355);

\draw [line width=0.3pt,dash pattern=on 2pt off 2pt] (11.35839088671121,4.390143573432856)-- (9.345119644527374,5.7994334429615355);

\draw [->,line width=0.3pt, >=stealth] (11.35839088671121,4.390143573432856) -- (10.40204951365297,5.059582534573621); 

\draw [->,line width=0.3pt, >=stealth] (9.345119644527374,5.7994334429615355) -- (9.261545609882855,6.969469927984784); 

\draw [line width=0.3pt,dash pattern=on 4pt off 4pt] (6.214954102393178,5.575850189951949)-- (14.355928069518242,6.157348330460884);

\begin{scriptsize} 

\draw[color=black] (-1.,1.2) node {$K^\circ _j$};

\draw [fill=black] (2.432888379696236,4.70331909999479) circle (0.5pt);

\draw[color=black] (3.1,4.65) node {$D\varphi (y_j)$};

\draw [fill=black] (0.9335153724987351,5.970231839275003) circle (0.5pt); 

\draw[color=black] (1.1354469125111017,6.3) node  {$\mu_j$}; 

\draw[color=black] (1.8,4.8) node {$\nu_j$}; 

\draw[color=black] (1.68,7.) node {$\mathrm{v}_j = D\gamma_{j}^{\circ}(\mu_{j})$}; 

\draw [fill=black] (9.345119644527374,5.7994334429615355) circle (0.5pt); 

\draw[color=black] (9.6,6.1) node {$\mu ^*$}; 

\draw[color=black] (8.987204757028033,1.8288151597656506) node {$K^\circ$}; 

\draw [fill=black] (11.35839088671121,4.390143573432856) circle (0.5pt); 

\draw[color=black] (12,4.34) node {$D\varphi (y)$}; 

\draw[color=black] (10.620191431243805,4.468437455073338) node {$\nu ^*$}; 

\draw[color=black] (9.5,7.) node {$\mathrm{v}$}; 

\draw[color=black] (11.940002578897648,6.25) node {$\Gamma_{\mu ^{*} ,\mathrm{v}}$}; 

\end{scriptsize} 
\end{tikzpicture}

\caption{The relative situation of $\mathrm{v},\,\nu^{*}$ does not allow $\langle\mathrm{v},\nu^{*}\rangle$  to be zero.}
\label{fig: 3}

\end{figure}

Now note that $\mathrm{v}_{j}:=D\gamma_{j}^{\circ}(\mu_{j}(y_{j}))$
belongs to the normal cone $N(K_{j}^{\circ},\mu_{j}(y_{j}))$. In
addition we have $\gamma_{j}(\mathrm{v}_{j})=1$ due to (\ref{eq: g0 (Dg)=00003D1}).
Hence we have $\mathrm{v}_{j}\in K_{j}\subset K$. Thus by passing
to yet another subsequence we can assume that $\mathrm{v}_{j}\to\mathrm{v}\in K$.
We also have $\gamma_{1}(\mathrm{v}_{j})\ge1$, since $\gamma_{j}\le\gamma_{1}$
due to $K_{j}\supset K_{1}$. So we get $\gamma_{1}(\mathrm{v})\ge1$.
In particular $\mathrm{v}\ne0$. We claim that $\mathrm{v}\in N(K^{\circ},\mu^{*})$.
To see this note that we have 
\[
K^{\circ}\subset K_{j}^{\circ}\subset\{z:\langle z-\mu_{j}(y_{j}),\mathrm{v}_{j}\rangle\le0\}.
\]
Hence for every $z\in K^{\circ}$ we have $\langle z-\mu_{j}(y_{j}),\mathrm{v}_{j}\rangle\le0$.
But as $j\to\infty$ we have $z-\mu_{j}(y_{j})\to z-\mu^{*}$. So
we get $\langle z-\mu^{*},\mathrm{v}\rangle\le0$. Therefore 
\[
K^{\circ}\subset\{z:\langle z-\mu^{*},\mathrm{v}\rangle\le0\},
\]
as desired. 

On the other hand, by (\ref{eq: 2 in reg Nonconv dom}) we obtain
\begin{equation}
\langle\mathrm{v},\nu^{*}\rangle=\lim\langle\mathrm{v}_{j},\nu_{j}\rangle=0.\label{eq: 3 in reg Nonconv dom}
\end{equation}
Now note that $D\varphi=\mu^{*}-\lambda^{*}\nu^{*}$ belongs to the
ray passing through $\mu^{*}\in\partial K^{\circ}$ in the direction
$-\nu^{*}$. However, we know that $D\varphi$ is in the interior
of $K^{\circ}$, since $\gamma^{\circ}(D\varphi)<1$. Hence we must
have $\lambda^{*}>0$ (since $\gamma^{\circ}(\mu^{*})=1$). And thus
the ray $t\mapsto\mu^{*}-t\nu^{*}$ for $t>0$ passes through the
interior of $K^{\circ}$. Therefore this ray and $K^{\circ}$ must
lie on the same side of the supporting hyperplane $\Gamma_{\mu^{*},\mathrm{v}}$.
In addition, the ray cannot lie on the hyperplane, since it intersects
the interior of $K^{\circ}$. Hence we must have $\langle\mathrm{v},\nu^{*}\rangle=-\langle\mathrm{v},-\nu^{*}\rangle>0$,
which contradicts (\ref{eq: 3 in reg Nonconv dom}). See Figure \ref{fig: 3}
for a geometric representation of this argument.

Thus $\langle D\gamma_{k}^{\circ}(\mu_{k}),\nu\rangle$ must have
a positive lower bound independently of $k,y,B$, as desired. Therefore
$D^{2}\rho_{k,B^{c}}(x)$ has a uniform upper bound, independently
of $k,x,B$. Then as before it follows that 
\[
\delta\rho_{k,B^{c}}(x,h)=\rho_{k,B^{c}}(x+h)+\rho_{k,B^{c}}(x-h)-2\rho_{k,B^{c}}(x)\le C|h|^{2}
\]
for some constant $C$ independent of $k,x,B$, provided that the
segment $[x-h,x+h]$ does not intersect $B$. Now we truncate $\rho_{k,B^{c}}$
outside of a neighborhood of $\overline{U}$ to make it bounded. Note
that we can choose this bound uniformly, since 
\[
\rho_{k,B^{c}}(x)=\gamma_{k}(x-y)+\varphi(y)\le\gamma_{1}(x-y)+\varphi(y)\le\gamma_{1}(l)+\|\varphi\|_{L^{\infty}},
\]
where $l$ is the diameter of the chosen neighborhood of $\overline{U}$.
Also note that we can still make sure that $\rho_{k}\le\rho_{k,B^{c}}$
after truncation, since $\rho_{k}=\varphi$ outside $U$. And finally
we can show that $I\rho_{k,B^{c}}(x)\le C_{V}$ for $x\in V$, similarly
to the case of zero exterior data.
\end{proof}
\begin{acknowledgement*}
This research was in part supported by Iran National Science Foundation
Grant No 97012372.
\end{acknowledgement*}
\bibliographystyle{plainnat}
\bibliography{/Volumes/A/Dropbox/Bibliography-Jan-2021}

\def\cprime{$'$} \def\cprime{$'$} \def\cprime{$'$} \def\cprime{$'$}
  \def\cprime{$'$} \def\cprime{$'$} \def\cprime{$'$} \def\cprime{$'$}
\begin{thebibliography}{49}
\providecommand{\natexlab}[1]{#1}
\providecommand{\url}[1]{\texttt{#1}}
\expandafter\ifx\csname urlstyle\endcsname\relax
  \providecommand{\doi}[1]{doi: #1}\else
  \providecommand{\doi}{doi: \begingroup \urlstyle{rm}\Url}\fi

\bibitem[Barles and Imbert(2008)]{barles2008second}
G.~Barles and C.~Imbert.
\newblock Second-order elliptic integro-differential equations: viscosity
  solutions' theory revisited.
\newblock \emph{Ann. Inst. H. Poincar\'e Anal. Non Lin\'eaire}, 25\penalty0
  (3):\penalty0 567--585, 2008.

\bibitem[Barles and Soner(1998)]{barles1998option}
G.~Barles and H.~M. Soner.
\newblock Option pricing with transaction costs and a nonlinear
  {B}lack-{S}choles equation.
\newblock \emph{Finance Stoch.}, 2\penalty0 (4):\penalty0 369--397, 1998.

\bibitem[Benth et~al.(2001)Benth, Karlsen, and Reikvam]{benth2001optimal}
F.~E. Benth, K.~H. Karlsen, and K.~Reikvam.
\newblock Optimal portfolio selection with consumption and nonlinear
  integro-differential equations with gradient constraint: a viscosity solution
  approach.
\newblock \emph{Finance Stoch.}, 5\penalty0 (3):\penalty0 275--303, 2001.

\bibitem[Benth et~al.(2002)Benth, Karlsen, and Reikvam]{benth2002portfolio}
F.~E. Benth, K.~H. Karlsen, and K.~Reikvam.
\newblock Portfolio optimization in a {L}\'evy market with intertemporal
  substitution and transaction costs.
\newblock \emph{Stoch. Stoch. Rep.}, 74\penalty0 (3-4):\penalty0 517--569,
  2002.

\bibitem[Bjorland et~al.(2012)Bjorland, Caffarelli, and
  Figalli]{bjorland2012nonlocal}
C.~Bjorland, L.~A. Caffarelli, and A.~Figalli.
\newblock Nonlocal tug-of-war and the infinity fractional laplacian.
\newblock \emph{Commun. Pure Appl. Math.}, 65\penalty0 (3):\penalty0 337--380,
  2012.

\bibitem[Brezis and Stampacchia(1968)]{MR0239302}
H.~Brezis and G.~Stampacchia.
\newblock Sur la r\'egularit\'e de la solution d'in\'equations elliptiques.
\newblock \emph{Bull. Soc. Math. France}, 96:\penalty0 153--180, 1968.

\bibitem[Caffarelli and Silvestre(2009)]{caffarelli2009regularity}
L.~Caffarelli and L.~Silvestre.
\newblock Regularity theory for fully nonlinear integro-differential equations.
\newblock \emph{Commun. Pure Appl. Math.}, 62\penalty0 (5):\penalty0 597--638,
  2009.

\bibitem[Caffarelli and Silvestre(2011{\natexlab{a}})]{caffarelli2011evans}
L.~Caffarelli and L.~Silvestre.
\newblock The {E}vans-{K}rylov theorem for nonlocal fully nonlinear equations.
\newblock \emph{Ann. Math.}, pages 1163--1187, 2011{\natexlab{a}}.

\bibitem[Caffarelli and
  Silvestre(2011{\natexlab{b}})]{caffarelli2011regularity}
L.~Caffarelli and L.~Silvestre.
\newblock Regularity results for nonlocal equations by approximation.
\newblock \emph{Arch. Ration. Mech. Anal.}, 200\penalty0 (1):\penalty0 59--88,
  2011{\natexlab{b}}.

\bibitem[Caffarelli et~al.(2008)Caffarelli, Salsa, and
  Silvestre]{caffarelli2008regularity}
L.~Caffarelli, S.~Salsa, and L.~Silvestre.
\newblock Regularity estimates for the solution and the free boundary of the
  obstacle problem for the fractional laplacian.
\newblock \emph{Invent. Math.}, 171\penalty0 (2):\penalty0 425--461, 2008.

\bibitem[Caffarelli et~al.(2017)Caffarelli, Ros-Oton, and
  Serra]{caffarelli2017obstacle}
L.~Caffarelli, X.~Ros-Oton, and J.~Serra.
\newblock Obstacle problems for integro-differential operators: regularity of
  solutions and free boundaries.
\newblock \emph{Invent. Math.}, 208\penalty0 (3):\penalty0 1155--1211, 2017.

\bibitem[Caffarelli and Friedman(1979)]{MR534111}
L.~A. Caffarelli and A.~Friedman.
\newblock The free boundary for elastic-plastic torsion problems.
\newblock \emph{Trans. Amer. Math. Soc.}, 252:\penalty0 65--97, 1979.

\bibitem[Caffarelli and Rivi{\`e}re(1979)]{MR513957}
L.~A. Caffarelli and N.~M. Rivi{\`e}re.
\newblock The {L}ipschitz character of the stress tensor, when twisting an
  elastic plastic bar.
\newblock \emph{Arch. Ration. Mech. Anal.}, 69\penalty0 (1):\penalty0 31--36,
  1979.

\bibitem[Crasta and Malusa(2007)]{MR2336304}
G.~Crasta and A.~Malusa.
\newblock The distance function from the boundary in a {M}inkowski space.
\newblock \emph{Trans. Amer. Math. Soc.}, 359\penalty0 (12):\penalty0
  5725--5759 (electronic), 2007.

\bibitem[De~Silva and Savin(2010)]{MR2605868}
D.~De~Silva and O.~Savin.
\newblock Minimizers of convex functionals arising in random surfaces.
\newblock \emph{Duke Math. J.}, 151\penalty0 (3):\penalty0 487--532, 2010.

\bibitem[Evans(1979)]{MR529814}
L.~C. Evans.
\newblock A second-order elliptic equation with gradient constraint.
\newblock \emph{Commun. Partial. Differ. Equ.}, 4\penalty0 (5):\penalty0
  555--572, 1979.

\bibitem[Fern{\'a}ndez-Real and Ros-Oton(2018)]{fernandez2018obstacle}
X.~Fern{\'a}ndez-Real and X.~Ros-Oton.
\newblock The obstacle problem for the fractional laplacian with critical
  drift.
\newblock \emph{Math. Ann.}, 371\penalty0 (3):\penalty0 1683--1735, 2018.

\bibitem[Gilbarg and Trudinger(2001)]{MR1814364}
D.~Gilbarg and N.~S. Trudinger.
\newblock \emph{Elliptic Partial Differential Equations Of Second Order}.
\newblock Classics in Mathematics. Springer-Verlag, Berlin, 2001.

\bibitem[Hynd(2012)]{Hynd-2012}
R.~Hynd.
\newblock The eigenvalue problem of singular ergodic control.
\newblock \emph{Commun. Pure Appl. Math.}, 65\penalty0 (5):\penalty0 649--682,
  2012.

\bibitem[Hynd(2013)]{hynd2013analysis}
R.~Hynd.
\newblock Analysis of {H}amilton-{J}acobi-{B}ellman equations arising in
  stochastic singular control.
\newblock \emph{ESAIM Control Optim. Calc. Var.}, 19\penalty0 (1):\penalty0
  112--128, 2013.

\bibitem[Hynd(2017)]{Hynd2017}
R.~Hynd.
\newblock An eigenvalue problem for a fully nonlinear elliptic equation with
  gradient constraint.
\newblock \emph{Calc. Var. Partial Differ. Equ.}, 56\penalty0 (2):\penalty0 34,
  2017.

\bibitem[Hynd and Mawi(2016)]{Hynd}
R.~Hynd and H.~Mawi.
\newblock On hamilton-jacobi-bellman equations with convex gradient
  constraints.
\newblock \emph{Interfaces Free Bound.}, 18\penalty0 (3):\penalty0 291--315,
  2016.

\bibitem[Ishii and Koike(1983)]{MR693645}
H.~Ishii and S.~Koike.
\newblock Boundary regularity and uniqueness for an elliptic equation with
  gradient constraint.
\newblock \emph{Commun. Partial. Differ. Equ.}, 8\penalty0 (4):\penalty0
  317--346, 1983.

\bibitem[Korvenp{\"a}{\"a} et~al.(2016)Korvenp{\"a}{\"a}, Kuusi, and
  Palatucci]{korvenpaa2016obstacle}
J.~Korvenp{\"a}{\"a}, T.~Kuusi, and G.~Palatucci.
\newblock The obstacle problem for nonlinear integro-differential operators.
\newblock \emph{Calc. Var. Partial Differ. Equ.}, 55\penalty0 (3):\penalty0 63,
  2016.

\bibitem[Kriventsov(2013)]{kriventsov2013c}
D.~Kriventsov.
\newblock {$ C^{1,\alpha} $} interior regularity for nonlinear nonlocal
  elliptic equations with rough kernels.
\newblock \emph{Commun. Partial. Differ. Equ.}, 38\penalty0 (12):\penalty0
  2081--2106, 2013.

\bibitem[Lions(1982)]{MR667669}
P.-L. Lions.
\newblock \emph{Generalized solutions of {H}amilton-{J}acobi equations},
  volume~69 of \emph{Research Notes in Mathematics}.
\newblock Pitman (Advanced Publishing Program), Boston, Mass.-London, 1982.

\bibitem[Mou and {\'S}wi{\k{e}}ch(2015)]{mou2015uniqueness}
C.~Mou and A.~{\'S}wi{\k{e}}ch.
\newblock Uniqueness of viscosity solutions for a class of integro-differential
  equations.
\newblock \emph{NoDEA Nonlinear Differ. Equ. Appl.}, 22\penalty0 (6):\penalty0
  1851--1882, 2015.

\bibitem[{\O}ksendal and Sulem(2019)]{oksendal2007applied}
B.~{\O}ksendal and A.~Sulem.
\newblock \emph{Applied stochastic control of jump diffusions}, volume 498.
\newblock Springer, third edition, 2019.

\bibitem[Petrosyan and Pop(2015)]{petrosyan2015optimal}
A.~Petrosyan and C.~A. Pop.
\newblock Optimal regularity of solutions to the obstacle problem for the
  fractional laplacian with drift.
\newblock \emph{J. Funct. Anal.}, 268\penalty0 (2):\penalty0 417--472, 2015.

\bibitem[Possama{\"\i} et~al.(2015)Possama{\"\i}, Soner, and
  Touzi]{possamai2015homogenization}
D.~Possama{\"\i}, H.~M. Soner, and N.~Touzi.
\newblock Homogenization and asymptotics for small transaction costs: the
  multidimensional case.
\newblock \emph{Commun. Partial. Differ. Equ.}, 40\penalty0 (11):\penalty0
  2005--2046, 2015.

\bibitem[Rodrigues and Santos(2019)]{rodrigues2019nonlocal}
J.~F. Rodrigues and L.~Santos.
\newblock On nonlocal variational and quasi-variational inequalities with
  fractional gradient.
\newblock \emph{Appl. Math. Optim.}, 80\penalty0 (3):\penalty0 835--852, 2019.

\bibitem[Ros-Oton and Serra(2016)]{ros2016boundary}
X.~Ros-Oton and J.~Serra.
\newblock Boundary regularity for fully nonlinear integro-differential
  equations.
\newblock \emph{Duke Math. J.}, 165\penalty0 (11):\penalty0 2079--2154, 2016.

\bibitem[Ros-Oton and Serra(2017)]{ros2017boundary}
X.~Ros-Oton and J.~Serra.
\newblock Boundary regularity estimates for nonlocal elliptic equations in {$
  C^1$} and {$ C^{1,\alpha} $} domains.
\newblock \emph{Ann. Mat. Pura Appl.}, 196\penalty0 (5):\penalty0 1637--1668,
  2017.

\bibitem[Safdari(2015)]{Safdari20151}
M.~Safdari.
\newblock The free boundary of variational inequalities with gradient
  constraints.
\newblock \emph{Nonlinear Anal.}, 123-124:\penalty0 1--22, 2015.

\bibitem[Safdari(2017)]{safdari2017shape}
M.~Safdari.
\newblock On the shape of the free boundary of variational inequalities with
  gradient constraints.
\newblock \emph{Interfaces Free Bound.}, 19\penalty0 (2):\penalty0 183--200,
  2017.

\bibitem[Safdari(2018)]{MR1}
M.~Safdari.
\newblock The regularity of some vector-valued variational inequalities with
  gradient constraints.
\newblock \emph{Commun. Pure Appl. Anal.}, 17\penalty0 (2):\penalty0 413--428,
  2018.

\bibitem[Safdari(2019)]{Paper-4}
M.~Safdari.
\newblock The distance function from the boundary of a domain with corners.
\newblock \emph{Nonlinear Anal.}, 181:\penalty0 294--310, 2019.

\bibitem[Safdari(2021{\natexlab{a}})]{SAFDARI2021358}
M.~Safdari.
\newblock Double obstacle problems and fully nonlinear {PDE} with non-strictly
  convex gradient constraints.
\newblock \emph{J. Differ. Equ.}, 278:\penalty0 358--392, 2021{\natexlab{a}}.

\bibitem[Safdari(2021{\natexlab{b}})]{SAFDARI202176}
M.~Safdari.
\newblock Global optimal regularity for variational problems with nonsmooth
  non-strictly convex gradient constraints.
\newblock \emph{J. Differ. Equ.}, 279:\penalty0 76--135, 2021{\natexlab{b}}.

\bibitem[Safdari(2021{\natexlab{c}})]{Safd-Nonlocal-dbl-obst}
M.~Safdari.
\newblock Nonlocal fully nonlinear double obstacle problems.
\newblock \emph{preprint}, arxiv.org/abs/2105.09417, 2021{\natexlab{c}}.

\bibitem[Schmuckenschlaeger(1993)]{schmuckenschlaeger1993simple}
M.~Schmuckenschlaeger.
\newblock A simple proof of an approximation theorem of {H}. {M}inkowski.
\newblock \emph{Geom. Dedicata}, 48\penalty0 (3):\penalty0 319--324, 1993.

\bibitem[Schneider(2014)]{MR3155183}
R.~Schneider.
\newblock \emph{Convex bodies: the {B}runn-{M}inkowski theory}, volume 151 of
  \emph{Encyclopedia of Mathematics and its Applications}.
\newblock Cambridge University Press, Cambridge, expanded edition, 2014.

\bibitem[Shreve and Soner(1989)]{soner1989regularity}
S.~E. Shreve and H.~M. Soner.
\newblock Regularity of the value function for a two-dimensional singular
  stochastic control problem.
\newblock \emph{SIAM J. Control Optim.}, 27\penalty0 (4):\penalty0 876--907,
  1989.

\bibitem[Shreve and Soner(1991)]{soner1991free}
S.~E. Shreve and H.~M. Soner.
\newblock A free boundary problem related to singular stochastic control: the
  parabolic case.
\newblock \emph{Commun. Partial. Differ. Equ.}, 16\penalty0 (2-3):\penalty0
  373--424, 1991.

\bibitem[Silvestre(2007)]{silvestre2007regularity}
L.~Silvestre.
\newblock Regularity of the obstacle problem for a fractional power of the
  laplace operator.
\newblock \emph{Commun. Pure Appl. Math.}, 60\penalty0 (1):\penalty0 67--112,
  2007.

\bibitem[Ting(1966)]{MR0184503}
T.~W. Ting.
\newblock The ridge of a {J}ordan domain and completely plastic torsion.
\newblock \emph{J. Math. Mech.}, 15:\penalty0 15--47, 1966.

\bibitem[Treu and Vornicescu(2000)]{MR1797872}
G.~Treu and M.~Vornicescu.
\newblock On the equivalence of two variational problems.
\newblock \emph{Calc. Var. Partial Differ. Equ.}, 11\penalty0 (3):\penalty0
  307--319, 2000.

\bibitem[Wiegner(1981)]{MR607553}
M.~Wiegner.
\newblock The {$C\sp{1,1}$}-character of solutions of second order elliptic
  equations with gradient constraint.
\newblock \emph{Commun. Partial. Differ. Equ.}, 6\penalty0 (3):\penalty0
  361--371, 1981.

\bibitem[Yamada(1988)]{yamada1988hamilton}
N.~Yamada.
\newblock The hamilton-{J}acobi-{B}ellman equation with a gradient constraint.
\newblock \emph{J. Differ. Equ.}, 71:\penalty0 185--199, 1988.

\end{thebibliography}

\end{document}